\documentclass[reqno,12pt]{amsart}

\usepackage{graphics}
\usepackage{amssymb}
\usepackage{amsmath}
\usepackage{enumitem}

\date{}

\theoremstyle{plain}
\newtheorem{theorem}{Theorem}

\newtheorem{lemma}{Lemma}
\newtheorem{proposition}{Proposition}

\theoremstyle{definition}
\newtheorem{definition}{Definition}
\newtheorem{remark}{Remark}
\newtheorem{remarks}{Remarks}

\def\N{\mathbb N}
\def\Z{\mathbb Z}
\def\R{\mathbb R}

\def\phi{\varphi}
\def\geq{\geqslant}
\def\leq{\leqslant}
\def\tt{t\^ete-\`a-t\^ete}
\def\ul{\underline}

\title{Cutting arcs for torus links and trees}

\author{Filip Misev}
\address{Universit\"at Bern, Sidlerstrasse 5, CH-3012 Bern, Switzerland}
\email{filip.misev@math.unibe.ch}

\begin{document}

\begin{abstract} Among all torus links, we characterise those arising as links of simple plane curve singularities by the property that their fibre surfaces admit only a finite number of cutting arcs that preserve fibredness. The same property allows a characterisation of Coxeter-Dynkin trees (i.e., $A_n$, $D_n$, $E_6$, $E_7$ and $E_8$) among all positive tree-like Hopf plumbings.
\end{abstract}

\maketitle
\thispagestyle{empty}

\section{Introduction}

A {\em fibred link} is a link $L\subset S^3$ such that $S^3\setminus L$ fibers over the circle, and where each fibre is the interior of a Seifert surface $S$ for $L$ in $S^3$. Cutting $S$ along a properly embedded interval $\alpha$ (an {\em arc} for short) results in another Seifert surface $S'$ for another link $\partial S'=L'$. If $L'$ is again a fibred link with fibre $S'$, we say that $\alpha$ {\em preserves fibredness}. For example, $\alpha$ could be the spanning arc of a plumbed Hopf band, and cutting along $\alpha$ amounts to deplumbing that Hopf band. In \cite{BIRS}, Buck et al.\ give a simple criterion for when an arc preserves fibredness in terms of the monodromy $\phi\colon S\to S$. As a corollary, they prove that each of the torus links of type $T(2,n)$ admits only a finite number of such arcs up to isotopy. It turns out that among torus links, this is an exception:

\begin{theorem} \label{thm:1} Let $n,m\geq 4$ or $n=3, m\geq 6$. Then the fibre surface $S$ of the torus link $T(n,m)$ contains infinitely many homologically distinct cutting arcs preserving fibredness.
\end{theorem}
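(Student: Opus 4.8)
The plan is to reduce the statement to a problem about vanishing cycles and to exploit the criterion of \cite{BIRS} in the concrete form already isolated in the introduction: the spanning arc of a plumbed Hopf band preserves fibredness, and (by Gabai's theorem on Murasugi sums) deplumbing a Hopf band from a fibre surface again yields a fibre surface. Thus it suffices to produce infinitely many homologically distinct embedded simple closed curves $c\subset S$, each the core of a positive Hopf band plumbed into $S$, and to take the associated cocore (spanning) arcs $\delta_c$. Since $S$ is the Milnor fibre of the plane curve singularity $x^n+y^m$, the natural source of such curves is the set of \emph{vanishing cycles}: each is an embedded circle bounding a Lefschetz thimble and exhibits a Hopf-band summand of $S$, hence contributes a fibredness-preserving arc.

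First I would set up the Milnor lattice $(H_1(S),\langle\,\cdot\,,\cdot\,\rangle)$, where $\langle\,\cdot\,,\cdot\,\rangle$ is the symmetrised Seifert form, together with a distinguished basis of vanishing cycles and the Picard--Lefschetz action of the geometric monodromy group generated by the Dehn twists along the basis cycles. The homology classes of the vanishing cycles are roots (that is, $(-2)$-classes) and form a single orbit of this group. The key classical input is the dichotomy: the Milnor lattice of $x^n+y^m$ is definite --- equivalently, the set of roots is finite, equivalently the monodromy group is finite --- precisely when the singularity is simple, i.e.\ of type $A_k,D_4,E_6,E_8$; among torus links these are exactly $T(2,k)$, $T(3,3)$, $T(3,4)$ and $T(3,5)$. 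Under the hypothesis $n,m\geq 4$ or $n=3,m\geq 6$ the singularity is not simple, the form is indefinite, and there are infinitely many roots.

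To make the infinitude effective I would fix a product $\Psi$ of Dehn twists along vanishing cycles whose induced action $\psi$ on $H_1(S)$ has infinite order; such $\Psi$ exists because the monodromy group is infinite (note this is \emph{not} the total monodromy $\phi$, which is periodic). The curves $c_k:=\Psi^k(c_0)$ are then embedded vanishing cycles with classes that are pairwise distinct modulo the radical $W=\mathrm{im}\bigl(H_1(\partial S)\to H_1(S)\bigr)$. For the arcs, Poincar\'e--Lefschetz duality gives $H_1(S,\partial S)\cong\mathrm{Hom}(H_1(S),\Z)$, under which $[\delta_{c}]$ corresponds to the functional $\langle c,\,\cdot\,\rangle$; hence $[\delta_{c_k}]=[\delta_{c_l}]$ forces $c_k-c_l\in W$, so distinct classes of the $c_k$ modulo $W$ yield distinct arc classes. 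Since $W$ is the degeneracy subspace of the form, the infinitely many roots stay infinitely many modulo $W$, producing infinitely many homologically distinct $\delta_{c_k}$.

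The hard part will be the interface between the lattice theory and the topology in the two certifications just used. First, I must guarantee that each $c_k$ is not merely an abstract root but an \emph{embedded} vanishing cycle that is a genuine Hopf-plumbing summand of $S$, so that the criterion of \cite{BIRS} (equivalently Gabai's deplumbing) really applies to $\delta_{c_k}$; here one uses that $\Psi$ carries the distinguished collection of plumbing summands to another such collection, realising $S$ as a Hopf plumbing in which $c_k$ is a summand, and the embeddedness and framings must be controlled along the whole family. Second, I must verify that $W$ coincides with the kernel of the symmetrised Seifert form, so that indefiniteness descends to $H_1(S)/W$ and the homological distinctness survives passage to $H_1(S,\partial S)$. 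Both points are routine in small cases but require care in the indefinite range, and I expect the bookkeeping for $n=3,\ m\geq 6$, where the lattice just crosses from definite to indefinite, to be the most delicate.
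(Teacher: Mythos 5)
There is a genuine gap at the heart of your reduction: you never establish that the arcs $\delta_{c_k}$ preserve fibredness. Your certification rests on the claim that each $c_k=\Psi^k(c_0)$ is the core of a Hopf band \emph{plumbed into $S$ as embedded in $S^3$}, justified by saying that $\Psi$ ``carries the distinguished collection of plumbing summands to another such collection.'' But $\Psi$ is an abstract mapping class of the surface $S$, not the restriction of an ambient isotopy of $(S^3,S)$; being a plumbing summand is a property of the embedding (it requires a $2$-sphere meeting $S$ in a square), and it is not transported by non-ambient diffeomorphisms. The only maps for which this kind of transport is available are the monodromy itself and maps $\tau$ with $\phi\circ\tau\circ\phi=\tau$ (this is exactly Remark~\ref{rem:clean}: cleanliness in the sense of Theorem~\ref{thm:rv-cutting-arcs} is preserved by $\phi$, not by arbitrary products of Dehn twists, since $\Psi$ need not commute with $\phi$). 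And for torus links $\phi$ is freely periodic --- which is precisely why the infinite-orbit trick that works in the pseudo-Anosov setting (Theorem~\ref{thm:3}) is unavailable here and some other mechanism is needed. Without this step, indefiniteness of the Milnor lattice produces infinitely many root classes, but no fibredness-preserving arcs at all; note also that it is far from clear (and not in the literature you invoke) that every vanishing cycle of $x^n+y^m$ is a deplumbable Hopf core in $S^3$.

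Two further points would fail even if the main gap were repaired. First, the duality bookkeeping is off: Lefschetz duality identifies $[\delta_c]\in H_1(S,\partial S)$ with the functional $i(\delta_c,\cdot)$, not with $\langle c,\cdot\rangle$ for the symmetrised Seifert form (e.g.\ $i(\delta_c,c)=\pm1$ while $\langle c,c\rangle=\mp2$), and $W=\mathrm{im}\bigl(H_1(\partial S)\to H_1(S)\bigr)$ is the radical of the \emph{skew} intersection form, not of $\langle\,\cdot\,,\cdot\,\rangle$: already for $T(2,4)$ the $A_3$ lattice is nondegenerate while $W$ has rank one. Second, in the boundary cases $T(4,4)$ and $T(3,6)$ the symmetrised form is only semidefinite (affine type), where the real roots are finite \emph{modulo the radical}; so your final counting step (``infinitely many roots stay infinitely many modulo $W$'') breaks exactly where the theorem's hypothesis begins. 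The paper avoids all of this with a bare-hands construction: explicit arcs $\alpha_r=\gamma_1*\gamma_2*\gamma_3^{*r}*\gamma_4$ drawn on the bipartite-graph model of $S$, verified directly to satisfy $\alpha_r\cap\phi(\alpha_r)=\partial\alpha_r$ (hence clean by Theorem~\ref{thm:rv-cutting-arcs}, the monodromy being right-veering), and distinguished homologically by an explicit closed curve $c$ with $c^*(\alpha_r)=r+1$. If you want to salvage the lattice-theoretic flavour, the viable route is the one the paper uses for non-ADE trees in Proposition~\ref{prop:infinitetrees}: start from one arc whose cleanliness you can check directly, and generate the family by powers of $\phi$ itself --- but for torus links periodicity of $\phi$ blocks this, which is the real content your approach would have to overcome.
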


The remaining torus links $T(2,n)$, $T(3,3)$, $T(3,4)$ and $T(3,5)$ happen to be exactly those torus links that can also be obtained as plumbings of positive Hopf bands according to a finite tree, where vertices correspond to positive Hopf bands and edges indicate plumbing.

\begin{theorem} \label{thm:2} Let $S$ be the fibre surface obtained by plumbing positive Hopf bands according to a finite tree $T$. There are, up to isotopy, only finitely many cutting arcs in $S$ preserving fibredness, if and only if $T$ is one of the Coxeter-Dynkin trees $A_n$, $D_n$, $E_6$, $E_7$ or $E_8$.
\end{theorem}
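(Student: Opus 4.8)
My plan is to work homologically and reduce the enumeration of fibredness-preserving arcs to an enumeration of roots of the bilinear form attached to the tree. For the \emph{setup}: the core curves $c_v$ (one for each vertex $v$ of $T$) of the plumbed Hopf bands form a basis of $H_1(S;\Z)$, and with respect to this basis the symmetrised Seifert form is the Cartan form $C_T$ of the tree, with $2$ on the diagonal and $-1$ off-diagonal along each edge. The homological monodromy is then the Coxeter transformation $\phi_*=\prod_v t_{c_v}$, where $t_c(x)=x-\langle x,c\rangle c$ is the Picard--Lefschetz transvection. I will use the classical trichotomy: $C_T$ is positive definite exactly when $T$ is one of $A_n,D_n,E_6,E_7,E_8$, in which case the Weyl group and the root system $\{\beta:\langle\beta,\beta\rangle=2\}$ are finite; for every other tree $C_T$ is positive semidefinite or indefinite, the root system is infinite, and $\phi_*$ has infinite order.

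Next I would pass \emph{from the criterion to roots}. The criterion of \cite{BIRS} characterises fibredness-preservation of an arc $\alpha$ purely through the monodromy, via the intersection pattern of $\alpha$ and $\phi(\alpha)$; in particular it is invariant under $\phi$, so $\phi$ permutes the fibredness-preserving arcs, and the class map into $H_1(S,\partial S)\cong\mathrm{Hom}(H_1(S;\Z),\Z)$ intertwines this permutation with $\phi_*$. The aim is to show that, under the identification furnished by $C_T$, the classes of fibredness-preserving arcs are precisely the roots of $C_T$, so that counting such arcs up to homology becomes counting roots.

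For the direction \emph{non-ADE implies infinitely many arcs}: deplumbing a leaf Hopf band yields an explicit fibredness-preserving arc $\alpha_0$ whose class is a simple root $\beta_0$. By the $\phi$-invariance of the criterion, every $\phi^n(\alpha_0)$ again preserves fibredness, with homology class $\phi_*^n\beta_0$. When $T$ is not ADE, $\phi_*$ has infinite order and the orbit $\{\phi_*^n\beta_0\}$ is infinite (linear growth along the radical in the affine cases, exponential growth in the indefinite cases), so the $\phi^n(\alpha_0)$ are infinitely many homologically distinct, hence non-isotopic, fibredness-preserving arcs. This establishes the ``only if'' direction and reuses the mechanism behind Theorem \ref{thm:1}. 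If one prefers, this reduces first to the minimal non-ADE trees, namely the affine trees $\tilde D_n,\tilde E_6,\tilde E_7,\tilde E_8$ (the affine $\tilde A$ being a cycle, not a tree), using that a sub-plumbing along a subtree is a Murasugi summand and transports fibredness-preserving arcs into $S$.

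Finally, for \emph{ADE implies finitely many arcs}, finiteness of the root system gives finitely many homology classes of fibredness-preserving arcs; the genuinely hard point, and the step I expect to be the \emph{main obstacle}, is upgrading this to finitely many \emph{isotopy} classes, since one homology class can a priori carry infinitely many isotopy classes of arcs. Here I would exploit that the ADE plumbings are exactly the Milnor fibres of the simple, weighted-homogeneous singularities, so that $\phi$ is \emph{freely periodic}: the complement is Seifert fibred and some power $\phi^N$ is a product of boundary Dehn twists. Free periodicity forces the $\phi$-orbit of each arc to be finite up to isotopy (boundary twists act trivially once endpoints may slide along $\partial S$), and the resulting isometric model rigidifies arcs so that each root class supports only finitely many fibredness-preserving arcs. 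Combining finitely many classes, finitely many arcs per class, and finite orbits yields the finite total. I expect essentially all the difficulty to concentrate in this isotopy-rigidity within a fixed homology class in the positive-definite case; the homological bookkeeping and the ``only if'' direction are comparatively routine once the criterion of \cite{BIRS} is available.
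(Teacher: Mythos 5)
Your ``only if'' direction is essentially the paper's argument (Proposition~\ref{prop:infinitetrees}), but stated too loosely at the one point that matters: infinite order of $\phi_*$ does not by itself make the orbit $\{\phi_*^n\beta_0\}$ of your particular simple root infinite, nor does it make the arcs $\phi^n(\alpha_0)$ homologically distinct. The paper fills this in via A'Campo's spectral results: for trees that are neither spherical nor affine, $\phi_*$ has a real eigenvalue $\lambda$ with $|\lambda|>1$, and one pairs an eigenvector $c$ against an arc dual to a coordinate where $c$ is nonzero, using $i(\phi_*^n(c),\alpha)=i(c,\phi_*^{-n}(\alpha))\to\infty$; the affine trees need a separate argument via a Jordan block at eigenvalue $-1$ (note that there your identification of $H_1(S,\partial S)$ with $H_1(S)$ via $C_T$ breaks down, since $C_T$ is degenerate). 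Your growth remarks gesture at exactly this and could be made precise. Your optional reduction to the affine trees via Murasugi summands has its own gap: an arc clean for the sub-plumbing's monodromy need not be clean for the full monodromy (the extra twists change $\phi(\alpha)$ unless the arc avoids them, which forces the set-up of Theorem~\ref{thm:3}), and the orbit under the full $\phi$ is not the transported orbit, so infinitude does not transport for free.

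The genuine gap is in the ADE direction, and it sits precisely where you predicted the difficulty would be --- but predicting it is not closing it. Two steps are asserted without proof. First, the claimed correspondence ``classes of fibredness-preserving arcs $=$ roots of $C_T$'' is never established in either direction; the paper neither proves nor needs it, and cutting along a clean arc imposes no evident quadratic condition on its relative class. Second, and decisively: finiteness of root classes plus free periodicity of $\phi$ does not bound the number of \emph{isotopy} classes. Free periodicity gives each arc a finite $\phi$-orbit, but nothing prevents infinitely many distinct orbits, possibly within a single homology class; your phrase ``the resulting isometric model rigidifies arcs'' is exactly the statement to be proved, not a proof. Worse, your remark that boundary twists act trivially discards the one piece of structure the paper's finiteness proof runs on: the criterion $\alpha\cap\phi(\alpha)=\partial\alpha$ (with isotopies fixed at endpoints) is sensitive to the fractional twisting on the boundary annuli of the t\^ete-\`a-t\^ete model, and it is this twist length $\ell$ that powers Lemma~\ref{lem:consecutive}, showing a clean arc in normal position (Definition~\ref{def:normalpos}) cannot run along more than $\ell$ consecutive bands. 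That length bound, together with the bigon analysis and Lemmas~\ref{lem:conflict} and~\ref{lem:generalisedconflict}, is what reduces the problem to the finite case-by-case enumeration occupying all of Section~\ref{sec:propproofs} (Propositions~\ref{prop:3345} and~\ref{prop:DnE7}). Nothing in your outline substitutes for this: as written, the ADE half of the theorem remains unproved.
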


To prove the ``only if'' part of Theorem~\ref{thm:2}, we consider orbits of a fixed arc under the monodromy to produce families of arcs that preserve fibredness. The basic idea is that such an orbit is infinite if the monodromy has infinite order. For example, we show that in fact every (prime) positive braid link with pseudo-Anosov monodromy admits infinitely many non-isotopic arcs preserving fibredness. This suggests the following question: is it true that among all (non-split prime) positive braid links, the ADE plane curve singularities are exactly those that admit just a finite number of fibredness preserving arcs up to isotopy?

\subsection*{Plan of the article}
We use the shorthand {\em $ADE$ links} to refer to the links of the positive tree-like Hopf plumbings according to the trees $A_n$, $D_n$, $E_6$, $E_7$ or $E_8$. The subsequent section combines a criterion on arcs to preserve fibredness from~\cite{BIRS} with the property of monodromies of positive Hopf plumbed surfaces to be right-veering. This allows for the following simple test for an arc to preserve fibredness, in our situation: an arc preserves fibredness if and only if it does not intersect its image under the monodromy (up to free isotopy).

Section~\ref{sec:monodromies} contains descriptions of the fibre surfaces and the monodromies of the links we consider (torus links and the $ADE$ links). Alongside, we give a constructive proof of Theorem~\ref{thm:1}.

In Section~\ref{sec:exceptional}, we explain the idea of proof for the finiteness result that provides the ``if'' part of Theorem~\ref{thm:2}, and list the fibred links obtained by cutting the fibre surfaces of the $ADE$ links along an arc in Table~\ref{table}.

Section~\ref{sec:infinite} accounts for the cases where the monodromy has infinite order. This concerns in particular the positive tree-like Hopf plumbings that correspond to trees different from the $ADE$ trees and settles the ``only if'' part of Theorem~\ref{thm:2}.

At the beginning of Section~\ref{sec:propproofs}, we set up the notation and methods needed for the proof of the finiteness part of Theorem~\ref{thm:2}, which we split into Proposition~\ref{prop:3345} (concerning torus links) and Proposition~\ref{prop:DnE7} (concerning tree-like Hopf plumbings). The rest of that section is devoted to the proofs of these propositions.

\section{Right-veering surface diffeomorphisms and cutting arcs that preserve fibredness} \label{sec:right-veering}

In the sequel we would like to make statements on the relative position of two arcs $\alpha,\beta$ in a surface $S$ with boundary (that is, $\alpha,\beta$ are embedded intervals with endpoints on the boundary of $S$ that are nowhere tangent to $\partial S$). The following definition will simplify matters.

\begin{definition} Let $S$ be an oriented surface with boundary and let $\alpha,\beta\subset S$ be two arcs. A property $P(\alpha,\beta)$ is said to hold {\em after minimising isotopies on $\alpha$ and $\beta$}, if $P(\tilde{\alpha},\tilde{\beta})$ holds, where $\tilde{\alpha}$ and $\tilde{\beta}$ are obtained from $\alpha$, $\beta$ by two isotopies (fixed at the endpoints) that minimise the geometric number of intersections between the two arcs.
\end{definition}

The remainder of this section will recall the fact that every positive braid link (that is, the closure of a braid word consisting only of the positive generators of the braid group, without their inverses) is fibred and has so-called {\em right-veering} monodromy (see below for a definition). The torus links $T(n,m)$ provide examples, since they can be viewed as the closures of the positive braids $(\sigma_1\cdots\sigma_{n-1})^m$, where the $\sigma_i$ denote the (positive) standard generators of the braid group.

\begin{definition}[see \cite{HKM}, Definition~2.1] Let $S$ be an oriented surface with boundary and $\phi : S \to S$ a diffeomorphism that restricts to the identity on $\partial S$. Then, $\phi$ is called {\em right-veering} if for every arc $\alpha : [0,1]\to S$, the vectors $(\alpha'(0), (\phi\circ\alpha)'(0))$ form an oriented basis after minimising isotopies on $\alpha$ and $\phi\circ\alpha$. This means basically that arcs starting at a boundary point of $S$ get mapped ``to the right'' by $\phi$.
\end{definition}

It is known that every positive braid can be obtained as an iterated plumbing of positive Hopf bands (see \cite{St}). Since a Hopf band is a fibre and plumbing preserves fibredness, every positive braid link is fibred. Moreover the monodromy is a product of positive Dehn twists, since the monodromy of a (positive) Hopf band is a (positive) Dehn twist and the monodromy of a plumbing is the composition of the monodromies of the plumbed surfaces (see \cite{Ga}). A product of positive Dehn twists is right-veering \cite{HKM}. So we conclude that every positive braid link is fibred with right-veering monodromy. Together with a theorem by Buck et al., this property implies the following simple geometric criterion for when an arc preserves fibredness.

\begin{figure}[h]  
\includegraphics{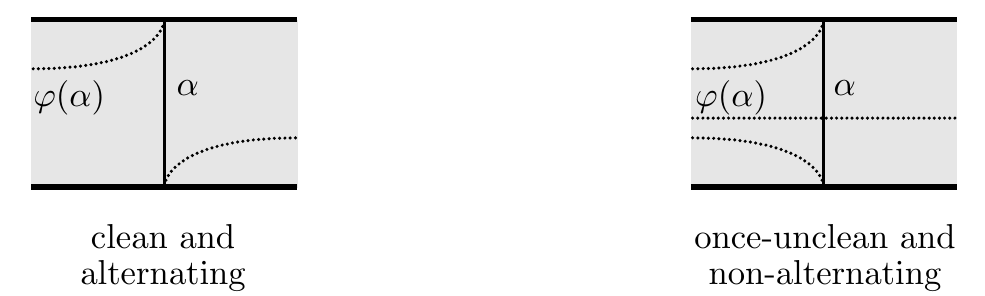}
\caption{(Adapted from \cite{BIRS}) \label{fig:alternating}}
\end{figure}

\begin{theorem}[compare Theorem~1 in \cite{BIRS}] \label{thm:rv-cutting-arcs} Let $L$ be a fibred link with fibre surface $S$ and right-veering monodromy $\phi:S\to S$. Then, a cutting arc $\alpha$ preserves fibredness if and only if $\alpha\cap\phi(\alpha)=\partial\alpha$ after minimising isotopies on $\alpha$ and $\phi(\alpha)$.
\end{theorem}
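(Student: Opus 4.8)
The plan is to read the statement off the criterion of Buck et al.\ (Theorem~1 of \cite{BIRS}, whose relevant local pictures are reproduced in Figure~\ref{fig:alternating}) once the right-veering hypothesis is imposed. That criterion already characterises, for an arbitrary fibred link, when a cutting arc $\alpha$ preserves fibredness purely in terms of the relative position of $\alpha$ and $\phi(\alpha)$ after minimising isotopies: fibredness is preserved exactly when $\alpha$ and $\phi(\alpha)$ realise one of the admissible (``alternating'') configurations of Figure~\ref{fig:alternating}, the simplest of which is the one with disjoint interiors. My first step is the easy implication. If $\alpha\cap\phi(\alpha)=\partial\alpha$ after minimising isotopies, there are no interior intersections, this is the trivial admissible configuration, and Theorem~1 of \cite{BIRS} gives that $\alpha$ preserves fibredness. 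Concretely, an arc disjoint from its monodromy image is cut off by a deplumbing-type operation, so fibredness is preserved; note that this direction does not use right-veering at all.

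For the converse I argue by contraposition, and this is where right-veering is essential. Assuming that $\alpha$ and $\phi(\alpha)$ meet in their interiors after minimising isotopies, I must show that they cannot realise any of the admissible configurations of \cite{BIRS}, so that $\alpha$ fails to preserve fibredness. The mechanism is that right-veering fixes the chirality of the local picture at the endpoints of $\alpha$: by definition $(\alpha'(0),(\phi\circ\alpha)'(0))$ is an oriented basis, so $\phi(\alpha)$ leaves the starting endpoint to the right of $\alpha$, and applying the definition to the reversed arc controls the terminal endpoint as well. The plan is to check that each nonempty admissible configuration of Figure~\ref{fig:alternating} contains, at an endpoint or at an interior crossing, a place where $\phi(\alpha)$ instead runs to the left of $\alpha$; such a left-turn is exactly what right-veering forbids. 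Consequently no nonempty configuration survives, and fibredness preservation forces $\alpha\cap\phi(\alpha)=\partial\alpha$.

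The step I expect to be the main obstacle is precisely this last matching: translating the ``alternating'' combinatorics of the \cite{BIRS} configurations into the right/left language of right-veering and verifying that the two are incompatible whenever an interior intersection is present. The bookkeeping is delicate because right-veering is a germ condition at the initial point of an arc, while the obstruction lives partly at interior crossings and at the terminal endpoint; I would handle the latter via the reversed arc, being careful that reversing orientation exchanges ``left'' and ``right,'' and would use that $\alpha$ and $\phi(\alpha)$ are in minimal position (no bigons) to rule out the degenerate cases. Once this compatibility check is in place, the theorem follows by combining it with Theorem~1 of \cite{BIRS} exactly as above.
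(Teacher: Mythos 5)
Your overall route --- specialising Theorem~1 of \cite{BIRS} under the right-veering hypothesis --- is the same as the paper's, but your reading of that criterion contains a genuine error that breaks the forward direction. Theorem~1 of \cite{BIRS} does \emph{not} say that fibredness is preserved exactly when $\alpha$ and $\phi(\alpha)$ realise a disjoint-type configuration: it says $\alpha$ preserves fibredness if and only if $\alpha$ is \emph{clean and alternating} or \emph{once unclean and non-alternating}. Two consequences. First, a clean arc that is non-alternating does \emph{not} preserve fibredness, so your claim that the implication ``$\alpha\cap\phi(\alpha)=\partial\alpha$ $\Rightarrow$ fibredness preserved'' uses no right-veering contradicts the cited criterion; right-veering is needed in \emph{both} directions, precisely to guarantee that every arc is alternating. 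Your ``deplumbing-type operation'' justification is a heuristic, and the alternating condition is exactly what it silently assumes. Second, among the configurations of \cite{BIRS} that \emph{do} preserve fibredness there is one with a genuine interior intersection (once unclean, non-alternating), so your description of the admissible pictures as all being interior-disjoint is also off; for the converse this is the case that must be excluded.

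For the converse you correctly identify the mechanism (right-veering is incompatible with the non-alternating picture), but you leave the key verification --- matching the alternating/non-alternating dichotomy of Figure~\ref{fig:alternating} against right/left veering --- as an acknowledged ``main obstacle.'' In the paper this step is immediate: alternating versus non-alternating is a condition only on how $\phi(\alpha)$ departs at the two endpoints of $\alpha$, and right-veering says by definition that $\phi(\alpha)$ veers to the right at every endpoint (the terminal endpoint being handled by the reversed arc, as you suggest), so \emph{every} arc is alternating. Hence the ``once unclean and non-alternating'' case is vacuous, ``clean and alternating'' reduces to ``clean,'' and both implications of Theorem~\ref{thm:rv-cutting-arcs} fall out simultaneously; no chirality bookkeeping at interior crossings is needed, and your concern about obstructions located at interior intersections is a red herring. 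To repair the proposal: state the \cite{BIRS} criterion with both cases, note that right-veering makes every arc alternating by definition, and conclude that $\alpha$ preserves fibredness if and only if it is clean, i.e.\ $\alpha\cap\phi(\alpha)=\partial\alpha$ after minimising isotopies.
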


\begin{proof} This is a special case of Theorem~1 in~\cite{BIRS}, saying that the arc $\alpha$ preserves fibredness if and only if $\alpha$ is {\em clean and alternating} or {\em once unclean and non-alternating} (see Figure~\ref{fig:alternating}), without the assumption on $\phi$ to be right-veering. But for a right-veering map, every arc is alternating, by definition. Finally, $\alpha$ is clean if and only if $\alpha\cap\phi(\alpha)=\partial\alpha$ after minimising isotopies on $\alpha$ and $\phi(\alpha)$.
\end{proof}

\begin{remark} \label{rem:clean}
An arc $\alpha$ is clean if and only if $\phi^k(\alpha)$ is clean, for all $k\in\Z$. This is clear since $\alpha\cap\phi(\alpha)=\partial\alpha$ after minimising isotopies if and only if $\phi^k(\alpha)\cap\phi^{k+1}(\alpha)=\partial\alpha$ after minimising isotopies. Similarly, if $\tau:S\to S$ is a homeomorphism such that $\phi\circ\tau\circ\phi=\tau$, then $\alpha$ is a clean arc if and only if $\alpha'=\tau(\phi(\alpha))$ is. Indeed, $\alpha\cap\phi(\alpha)=\partial\alpha\ \Leftrightarrow \ \tau(\alpha)\cap \tau(\phi(\alpha))=\partial\alpha'\ \Leftrightarrow \ \phi(\tau(\phi(\alpha)))\cap \tau(\phi(\alpha))=\partial\alpha'\ \Leftrightarrow \ \phi(\alpha')\cap \alpha'=\partial\alpha'$.
\end{remark}

\section{Monodromy of torus links, $E_7$ and $D_n$.} \label{sec:monodromies}
The links that correspond to the trees $A_n$, $E_6$ and $E_8$ are torus links, namely $A_{n-1}=T(2,n)$, $E_6=T(3,4)$ and $E_8=T(3,5)$. Together with $D_4$, which is $T(3,3)$, these form the intersection between torus links and positive tree-like Hopf plumbings. For our purpose it therefore suffices to study torus links, $E_7$ and the $D_n$ family.

The monodromies $\phi:S\to S$ of the links in question are particular examples of {\em {\tt} twists}, a notion invented by A'Campo and further developed by Graf in his thesis~\cite{Gr}. This means that there exists a $\phi$-invariant spine $\Gamma\subset S$, called {\em {\tt} graph}. Cutting $S$ along the {\tt} graph results in finitely many annuli, on which $\phi$ descends to certain twist maps. More precisely, each of these annuli has one component of $\partial S$ as one boundary circle and a cycle consisting of edges of $\Gamma$ as the other. $\phi$ fixes $\partial S$ pointwise and rotates the edge-cycles by some number $\ell$ of edges. The number $\ell\in\Z$ is called the {\em twist length} of the corresponding boundary annulus. After an isotopy (fixing the boundary of $S$), we may therefore assume that $\phi$ is periodic except on some annular neighbourhoods of $\partial S$. It is thus easy to understand the effect of $\phi$ on an arc $\alpha$, up to isotopy, given the combinatorics of the action of $\phi$ on $\Gamma$ and the amount of twisting on each annulus. Note that {\tt} twists define periodic mapping classes in the sense that some power is freely isotopic to the identity. However, this isotopy cannot be taken to be fixed on the boundary of $S$.

In a way dual to the {\tt} graph, we will find in each case a finite set of disjoint arcs that are permuted by $\phi$ and which decompose $S$ into finitely many polygons, one for each vertex of $\Gamma$. The combinatorics of how these polygons are permuted will be used to prove Theorems~\ref{thm:1} and~\ref{thm:2}.

\subsection*{Monodromy of torus links}
The fibre surface $S$ of the torus link $T(n,m)$ can be constructed as thickening of a complete bipartite graph on $n$ and $m$ vertices in the following way, as in Figure~\ref{fig:Knm}.
\begin{figure}[h]
\includegraphics{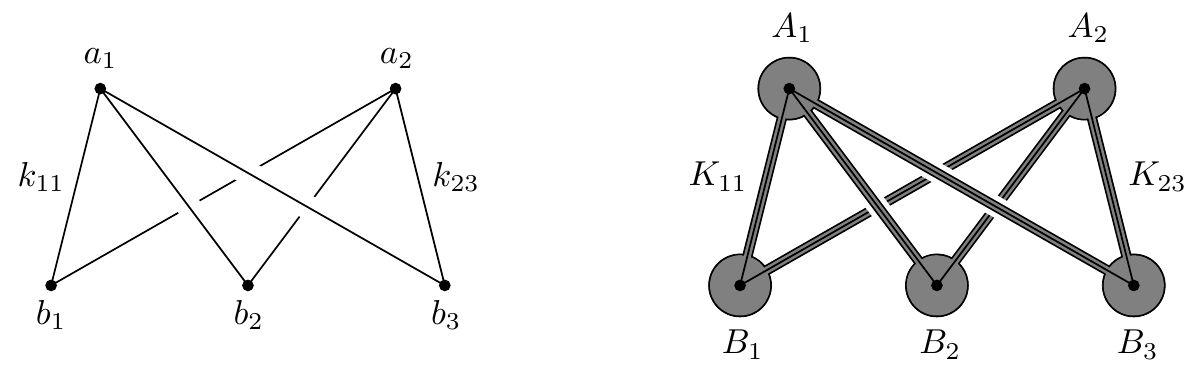}
\caption{The complete bipartite graph on $2$ and $3$ vertices and blackboard framed thickening.\label{fig:Knm}}
\end{figure}
Arrange $n$ collinear points $a_1,\ldots,a_n$ (in this order) in a plane and, similarly, another $m$ points $b_1,\ldots,b_m$ along a line parallel to the $a_i$. Connect $a_i$ and $b_j$ by a straight segment $k_{ij}$, for every $i\in\{1,\ldots,n\}$, $j\in\{1,\ldots,m\}$. Avoid intersections between the segments by letting $k_{ij}$ pass slightly under $k_{pq}$ if $i>p$ and $j<q$ (in a slight thickening of the plane containing the points $a_i$ and $b_j$). Use the blackboard-framing to thicken $a_i$, $b_j$, $k_{ij}$ to disks $A_i$, $B_j$ and bands $K_{ij}$. Choose the thickness of the bands $K_{ij}$ so that they do not intersect outside the disks $A_i, B_j$. It can be seen that $S:=\bigcup_i A_i \cup \bigcup_j B_j \cup \bigcup_{i,j} K_{ij} \subset\R^3\subset S^3$ is isotopic to the minimal Seifert surface of $T(n,m)$ in $S^3$ (compare \cite{Ba}). In addition, the monodromy $\phi:S\to S$ is a {\tt} twist along the above graph. In each of the $\gcd(n,m)$ complementary annuli, $\phi$ fixes $\partial S$ pointwise and rotates the edge-cycles two edges to the right with respect to the orientation of $S$. Using this description, it is possible to see that $\phi$ acts on the graph as follows: $\phi(a_i)=a_{i-1}$, $\phi(b_j)=b_{j+1}$, $\phi(k_{ij})=k_{i-1,j+1}$, where the indices $i,j$ are to be taken modulo $n,m$ respectively. A subarc of $\alpha$ that travels near $k_{ij}$ will be mapped to a subarc of $\phi(\alpha)$ that travels near $k_{i-1,j+1}$. The edges $k_{ij}$ induce a decomposition of $\partial A_i$ into circular arcs lying between points of the form $k_{ij}\cap\partial A_i$ (and the same for $B_j$). If $n,m\geq 3$, it is hence meaningful to speak of points on $\partial A_i$ {\em between} $k_{ij}$ and $k_{i,j+1}$.

\setcounter{theorem}{0}
\begin{theorem} Let $n,m\geq 4$ or $n=3, m\geq 6$. Then the fibre surface $S$ of the torus link $T(n,m)$ contains infinitely many homologically distinct cutting arcs preserving fibredness.
\end{theorem}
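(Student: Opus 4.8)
The plan is to reduce the statement, via the clean-arc criterion of Theorem~\ref{thm:rv-cutting-arcs}, to the production of a single essential simple closed curve that the monodromy displaces off itself. Recall that since $\phi$ is right-veering, a cutting arc $\alpha$ preserves fibredness exactly when $\alpha\cap\phi(\alpha)=\partial\alpha$ after minimising isotopies. Suppose we can find (i) a cutting arc $\alpha_0$ with $\alpha_0\cap\phi(\alpha_0)=\partial\alpha_0$, and (ii) an essential simple closed curve $c\subset S$ with $c\cap\phi(c)=\emptyset$, such that $c$ and $\phi(c)$ are both disjoint from $\alpha_0$ and from $\phi(\alpha_0)$, and such that the image of $[c]$ under $H_1(S)\to H_1(S,\partial S)$ is nonzero. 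Then, for each $r\geq 1$, let $\alpha_r$ be the arc obtained by band-summing $\alpha_0$ with $r$ parallel pushoffs of $c$. Because $\alpha_0$, $c$ and their $\phi$-images can all be realised pairwise disjointly, the same holds for $\alpha_r$ and $\phi(\alpha_r)$, so every $\alpha_r$ preserves fibredness by Theorem~\ref{thm:rv-cutting-arcs}. Moreover $[\alpha_r]=[\alpha_0]+r\,[c]$ in $H_1(S,\partial S)$; since the intersection pairing $H_1(S)\times H_1(S,\partial S)\to\Z$ is nondegenerate and $[c]$ pairs nontrivially with some closed curve, the classes $[\alpha_r]$ are pairwise distinct, yielding the desired infinite family.

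It therefore remains to construct $c$ (and the harmless arc $\alpha_0$) from the explicit combinatorial model of $S$ and $\phi$ described above, in which $\phi$ sends $k_{ij}$ to $k_{i-1,j+1}$, i.e.\ shifts indices by $(-1,+1)$ and in particular preserves each anti-diagonal $i+j=\mathrm{const}$. For $n,m\geq 4$ I would take $c$ to be the $4$-cycle $a_1\,b_1\,a_3\,b_3$ of the bipartite graph, running through the bands $k_{11},k_{31},k_{33},k_{13}$. Its image $\phi(c)$ is the $4$-cycle $a_0\,b_2\,a_2\,b_4$ through $k_{02},k_{22},k_{24},k_{04}$ (indices mod $n,m$). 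When $n,m\geq 4$ the vertex sets $\{a_1,a_3,b_1,b_3\}$ and $\{a_0,a_2,b_2,b_4\}$ are disjoint, so $c$ and $\phi(c)$ traverse disjoint disks and bands and are genuinely disjoint; the curve $c$ is a standard genus generator, hence has nonzero image in $H_1(S,\partial S)$. A single dual arc $\delta_{pq}$ crossing one band $k_{pq}$ far from this configuration serves as $\alpha_0$: it is clean because $k_{pq}$ and $\phi(k_{pq})=k_{p-1,q+1}$ are distinct bands, and $p,q$ can be chosen so that $\delta_{pq},\phi(\delta_{pq})$ avoid $c\cup\phi(c)$.

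The case $n=3$ is the delicate one and explains the stronger hypothesis $m\geq 6$. Here the three $a$-disks form a single $3$-cycle under $\phi$, so the vertex sets of $c$ and $\phi(c)$ necessarily share an $a$-vertex and the two curves must pass through a common disk $A_p$; whether they cross there is governed by whether the corresponding chords link in the cyclic order of band-attachments around $A_p$ (the polygon decomposition dual to the \tt\ graph). Disjointness after minimising isotopy thus reduces to a non-linking-chords condition in the shared disks, and it is exactly the extra room in the $b$-direction ($m\geq 6$) that lets one place the $b$-vertices of $c$ so that, after the shift $b_j\mapsto b_{j+1}$, the chords of $c$ and $\phi(c)$ remain unlinked; I would exhibit such a cycle explicitly. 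This verification---turning the topological condition $c\cap\phi(c)=\emptyset$ into a finite combinatorial check on chords, and matching the existence threshold for $c$ to the numerical hypotheses $n,m\geq 4$ resp.\ $n=3,m\geq 6$ (which fail precisely for the exceptional links $T(2,n)$, $T(3,3)$, $T(3,4)$, $T(3,5)$)---is the main obstacle; once $c$ is in hand, the band-summing construction and the homology computation are routine.
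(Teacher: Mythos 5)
Your strategy is in spirit the paper's own: the paper's arcs $\alpha_r=\gamma_1*\gamma_2*\gamma_3*\cdots*\gamma_3*\gamma_4$ are exactly a clean base arc with $r$ parallel copies of a $\phi$-displaced $4$-cycle $\gamma_3$ spliced in, and the homological separation via a dual curve $c^*$ with $c^*(\alpha_r)=r+1$ is the same as your pairing argument. But your reduction has a genuine gap at the band-sum step. From the pairwise disjointness of $\alpha_0$, $c$, $\phi(\alpha_0)$, $\phi(c)$ you conclude that $\alpha_r$ and $\phi(\alpha_r)$ can be realised meeting only at $\partial\alpha_r$; this does not follow. The band-summed arc contains a connecting band $B$ from $\alpha_0$ to $c$, and you need $B$ disjoint from $\phi(\alpha_0)\cup\phi(c)\cup\phi(B)$ --- but $\phi(B)$ is not freely chosen (it is the image of $B$), and $\phi(\alpha_0)\cup\phi(c)$ may well separate $\alpha_0$ from $c$; placing $\alpha_0$ ``far from the configuration'' makes this worse, not better. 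Moreover, the minimising isotopies in Theorem~\ref{thm:rv-cutting-arcs} are fixed at the endpoints: $\alpha_r$ and $\phi(\alpha_r)$ share their endpoints on $\partial S$, and it is precisely near $\partial S$ that $\phi$ twists (twist length $2$ on each boundary annulus), so the behaviour of the configuration at the boundary must be controlled explicitly. This is exactly the work the paper does by pinning down $\gamma_1$, $\gamma_2$, $\gamma_4$ and the boundary endpoints (``between $k_{n1}$ and $k_{nm}$'', etc.) and checking the whole picture of $\alpha_1\cup\phi(\alpha_1)$ in Figure~\ref{fig:InfiniteIntervalnm}; your proposal asserts it.

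The second gap is that the case $n=3$, $m\geq 6$ --- the delicate half of the statement, as you yourself note --- is deferred (``I would exhibit such a cycle explicitly''), and your sketch points at a construction that in fact fails. For $n=3$, any $4$-cycle $c$ through $a_i,a_{i'}$ shares exactly one $a$-disk $A_p$ with $\phi(c)$, and there the two chords join the band pairs $\{j_1,j_2\}$ and $\{j_1{+}1,j_2{+}1\}$ in the cyclic order of band attachments around $\partial A_p$; these interleave whenever $j_1,j_2$ are not cyclically adjacent, forcing a crossing that no amount of room in the $b$-direction removes (the shift $b_j\mapsto b_{j+1}$ is rigid). So a $4$-cycle cannot serve as your $c$, which is why the paper's $\gamma_3$ for $n=3$ is a $6$-cycle (through $k_{23},k_{13},k_{11},k_{31},k_{3,m-1},k_{2,m-1}$), with $m\geq 6$ needed to fit it and its image disjointly. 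Exhibiting such a cycle, together with the boundary-compatible connecting arcs, is the actual content of the theorem in this range, so as it stands the proposal is a correct plan with its two crucial verifications missing.
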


\begin{figure}[h]
\includegraphics{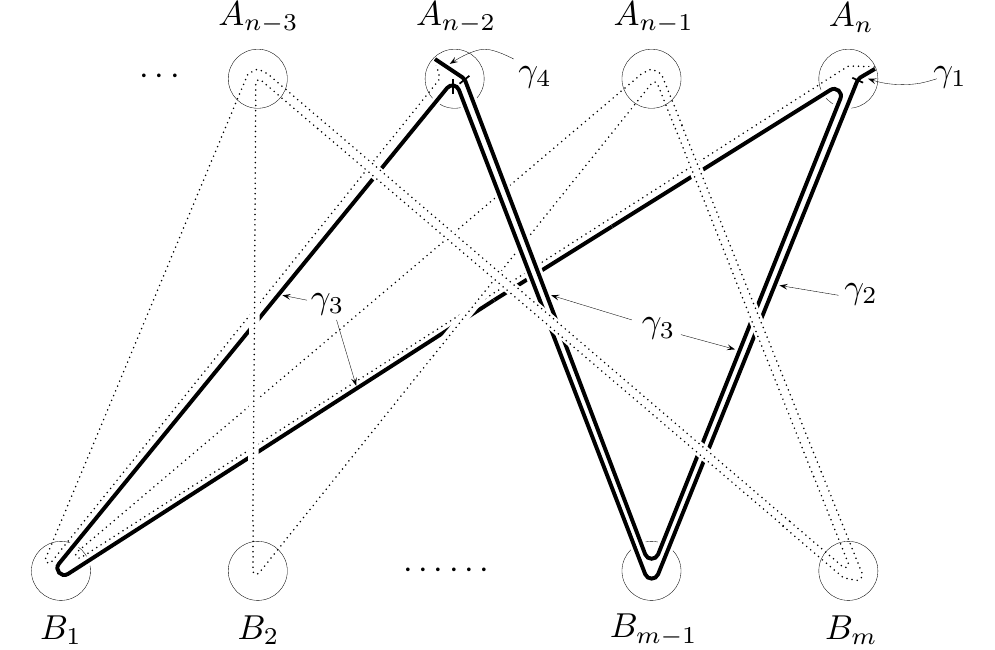}
\caption{The arc $\alpha_1=\gamma_1*\gamma_2*\gamma_3*\gamma_4$ (solid line) and its image under the monodromy (dotted line). Note that these two arcs do not intersect, except at their endpoints.\label{fig:InfiniteIntervalnm}}
\end{figure}

\begin{proof} For $n,m\geq 4$ consider the following arcs in $S$, using the notation from above (compare Figure~\ref{fig:InfiniteIntervalnm}):

\begin{itemize}
\item Let $\gamma_1$ be a straight segment starting at a point of $\partial A_n$ between $k_{n1}$ and $k_{nm}$ and ending at the vertex $a_n$.
\item Let $\gamma_2$ start at $a_n$, follow the edges $k_{n,m-1}$ and $k_{n-2,m-1}$, thus ending at $a_{n-2}$.
\item $\gamma_3$ starts at $a_{n-2}$, runs along $k_{n-2,1}$, $k_{n1}$, $k_{n,m-1}$, $k_{n-2,m-1}$ and ends again at $a_{n-2}$.
\item $\gamma_4$ is a straight segment from $a_{n-2}$ to a point of $\partial A_{n-2}$ between $k_{n-2,1}$ and $k_{n-2,m}$.
\end{itemize}

From $\gamma_1,\gamma_2,\gamma_3,\gamma_4$ we can build an infinite family $(\alpha_r)_{r\in\N}$ of arcs in $S$, taking $\alpha_r = \gamma_1*\gamma_2*\underbrace{\gamma_3*\ldots *\gamma_3}_{r-\text{times}} *\gamma_4$. Here, $*$ denotes concatenation of paths. Replacing the $r$ consecutive copies of $\gamma_3$ by $r$ parallel copies, the $\alpha_r$ can be thought of as embedded arcs. It is now easy to check that $\alpha_r$ and its image $\phi_*\alpha_r$ under the monodromy $\phi$ have only their endpoints in common. Using Theorem~\ref{thm:rv-cutting-arcs} it follows that each $\alpha_r$ preserves fibredness. Finally, the $\alpha_r$ are homologically pairwise distinct. This can be seen in the following way: Let $[c]\in H_1(S,\Z)$ be the cycle represented by a simple closed curve $c$ whose image is $k_{nm}\cup k_{n-1,m}\cup k_{n-1,m-1}\cup k_{n,m-1}$. After an isotopy, $c$ will intersect $\alpha_r$ transversely in $r+1$ points. Now, the linear form on $H_1(S,\partial S,\Z)$ that sends $\alpha$ to $i(c,\alpha)$, the number of intersections with $c$ (counted with signs), defines an element $c^*$ of $H^1(S,\partial S,\Z)$ such that $c^*(\alpha_r)=r+1$, hence the claim.

If $n=3, m\geq 6$, take the following arcs (compare Figure~\ref{fig:InfiniteInterval3m}):
\begin{figure}[h]
\includegraphics{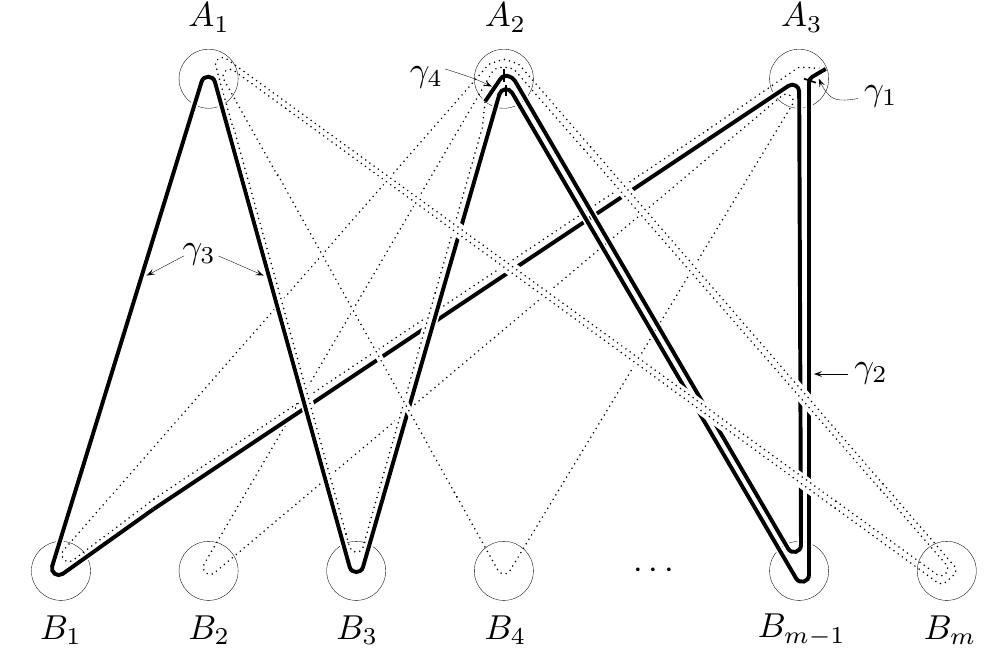}
\caption{The arc $\alpha_1$ (solid line) and its image under the monodromy (dotted line) for a $T(3,m)$ torus link, $m\geq 6$. Again, the two arcs do not intersect.\label{fig:InfiniteInterval3m}}
\end{figure}

\begin{itemize}
\item $\gamma_1$ is a straight segment from a point of $\partial A_3$ between $k_{31}$ and $k_{3m}$ to $a_3$.
\item $\gamma_2$ starts at $a_3$, follows the edges $k_{3,m-1}$ and $k_{2,m-1}$, thus ending at $a_2$.
\item $\gamma_3$ starts at $a_2$, follows $k_{23}$, $k_{13}$, $k_{11}$, $k_{31}$, $k_{3,m-1}$ and $k_{2,m-1}$, ending at $a_2$.
\item $\gamma_4$ is a straight segment from $a_2$ to a point of $\partial A_2$ between $k_{22}$ and $k_{23}$.
\end{itemize}

As above, we get a family $(\alpha_r)_{r\in\N}$ of homologically distinct arcs preserving fibredness, where $\alpha_r = \gamma_1*\gamma_2*\underbrace{\gamma_3*\ldots *\gamma_3}_{r-\text{times}} *\gamma_4$, using the curve with image $k_{3m}\cup k_{1m}\cup k_{1,m-2}\cup k_{3,m-2}$ to distinguish the $\alpha_r$.
\end{proof}

\subsection*{Monodromy of $E_7$ and $D_n$} In order to obtain a similar model for the fibre surface $S$ of $E_7$ or $D_n$, start with two disjoint planar disks $D, D'$ in $\R^3$ and connect them by half twisted bands $b_1,\ldots,b_n$, where $n=7$ in the case of $E_7$. The embedded surface $S'=D\cup D'\cup b_1\cup\ldots\cup b_n$ is then a fibre surface for the $T(2,n)$ torus link. Let $p\in \partial D$ be a point between $b_2$ and $b_3$ in the case of $D_n$, respectively between $b_3$ and $b_4$ in the case of $E_7$. Let $I$ be an arc in $D$ from a point of $\partial D$ between $b_1$ and $b_n$ to $p$. Finally, define $S$ to be the surface obtained from $S'$ by plumbing a positive Hopf band along $I$ below the surface $S'$. Denote the core curve of that plumbed Hopf band by $e_1$ (so $e_1\cap S'=I$). Each pair of consecutive bands $b_i,b_{i+1}$, $1\leq i\leq n$, gives rise to a closed curve $e_{i+1}$ that runs from $D$ to $D'$ through $b_i$ and back to $D$ through $b_{i+1}$.
\begin{figure}[h]
\includegraphics{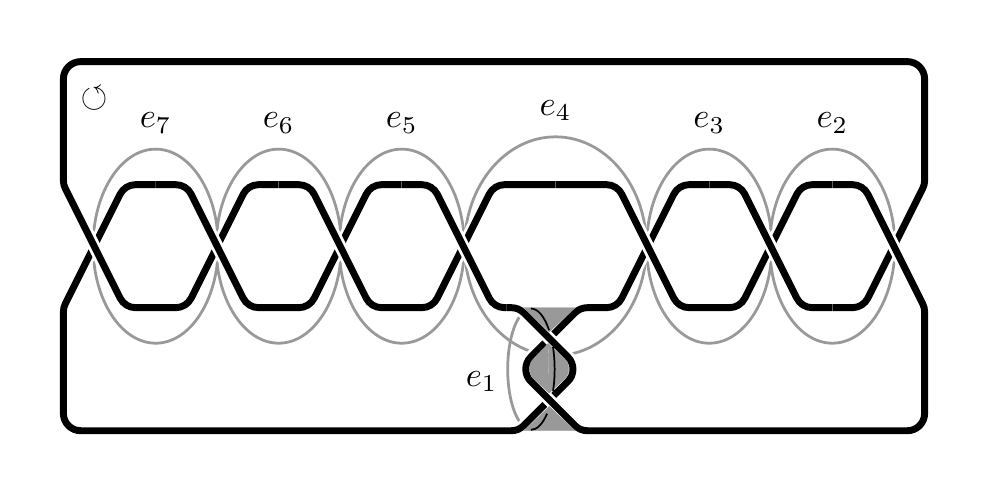}
\caption{$E_7$ fibre surface with homology basis coming from the plumbing tree. $\phi$ is the product of the right handed Dehn twists on the cuves $e_i$. \label{fig:E7Surface}}
\end{figure}
\begin{figure}[h]
\includegraphics{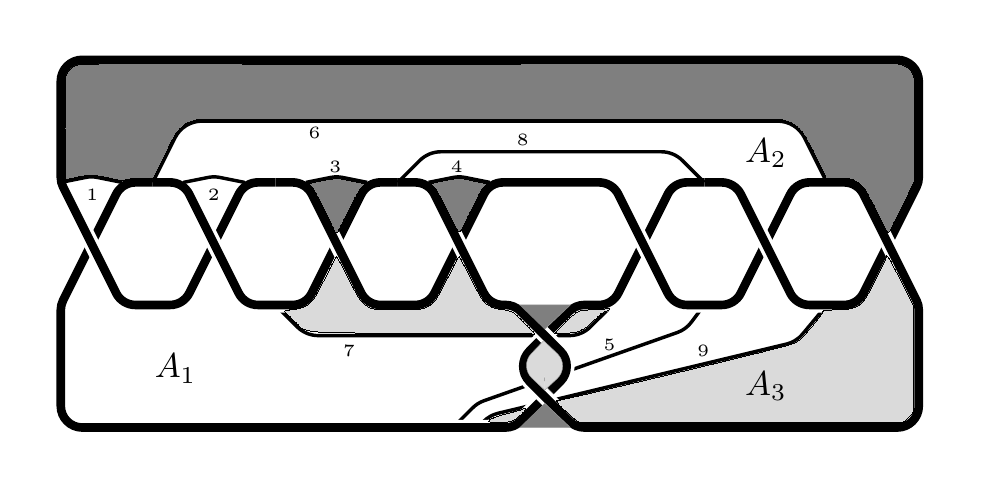}
\caption{Decomposition of the surface into three hexa\-gons $A_1,A_2,A_3$. Hexagon $A_3$ is shaded grey. The monodromy permutes the intervals $k_i$ (marked $1,2,\ldots,9$) cyclically. \label{fig:E7Intervals}}
\end{figure}
The incidence graph for the system of curves $e_1,\ldots, e_n$ in $S$ is exactly the respective Coxeter-Dynkin tree $E_7$ or $D_n$ (compare Figure~\ref{fig:E7Surface}).
The $e_i$ are core curves of positive Hopf bands and $S$ is a tree-like positive Hopf plumbing according to the respective tree. In particular, the monodromy $\phi$ of $S$ is the product of the right handed Dehn twists about the curves $e_2,e_3,\ldots,e_n,e_1$, in this order.
Just as in the case of torus links, we will find a finite number of disjoint arcs in $S$ that are permuted (up to free isotopy) by $\phi$ and such that these arcs cut $S$ into polygons. For $E_7$, let $k_1$ be the spanning arc of $b_7$, and let $k_{i+1} = \phi^i(k_1)$, $i=1,\ldots,8$, up to free isotopy (compare Figure~\ref{fig:E7Intervals}). Up to free isotopy, $\phi(k_9)=k_1$. This can be seen by applying the Dehn twists about the $e_j$ to the $k_i$, as described above. Another more visual way to see this is via {\em dragging arcs}. Imagine the arcs $k_i$ to be elastic bands whose ends are attached to the surface boundary and whose interiors are pushed slightly off the surface into the positive normal direction. Applying the monodromy $\phi$ amounts to dragging the arc through the complement of $S$ to the negative side of the surface, while its endpoints stay fixed on $\partial S$. Since we are only interested in the position of $\phi(k_i)$ up to free isotopy, the endpoints of the dragging arc may move freely along $\partial S$ during that process. Let $A_1,A_2,A_3$ be the three disk components of $S\setminus \bigcup_{i=1}^9 k_i$. The boundary of $A_j$ alternates between parts of $\partial S$ and the $k_i$. We choose the order as in Figure~\ref{fig:E7Hexagons}, where the components of $\partial A_j\cap \partial S$ are shrunk to points.
\begin{figure}[h]
\includegraphics{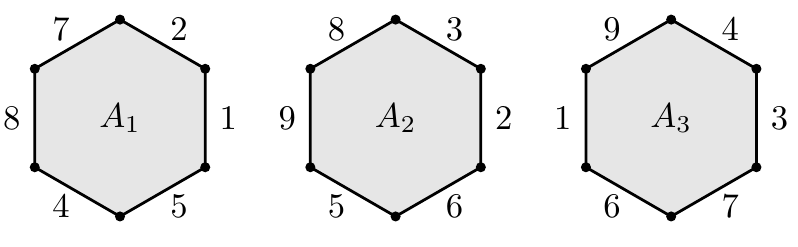}
\caption{Edges with the same label are glued. The monodromy sends $A_j$ to $A_{j+1}$, indices taken modulo $3$, such that edge $k_i$ is sent to edge $k_{i+1}$, modulo $9$. \label{fig:E7Hexagons}}
\end{figure}

\noindent Examination of the action of $\phi$ on the $k_i$ reveals that the $A_i$ are cyclically permuted by $\phi$, in the order $A_1\mapsto A_2\mapsto A_3\mapsto A_1$. In Figure~\ref{fig:E7Hexagons}, the $A_i$ are drawn in such a way that $A_1\mapsto A_2\mapsto A_3$ by translation to the right, and $A_3$ is mapped to $A_1$ by a translation, followed by a clockwise rotation through $1/3$. To obtain the {\tt} graph $\Gamma$, put a vertex in the middle of each hexagon $A_j$ and connect them by edges through the center of every $k_i$, connecting the vertices of the adjacent hexagons. The {\tt} twist lengths on the two boundary annuli are $1$ and $2$, respectively.

For the case of $D_n$, $n$ odd, take $k_1$ to be the spanning arc of $b_1$ and let $k_{i+1}=\phi^i(k_1)$, $i=1,\ldots,2n-3$. As before, we have $\phi(k_{2n-2})=k_1$, and the $k_i$ decompose $S$ into $n-1$ disks $A_1,\ldots,A_{n-1}$, as in Figure~\ref{fig:DnIntervalsOdd}. In Figure~\ref{fig:DnSquares} (top), $\phi$ maps $A_1\mapsto A_2\mapsto\cdots\mapsto A_{n-1}$ by right translations and sends $A_{n-1}$ back to $A_1$ by a rotation of $180^\circ$.

If $n$ is even, we use two orbits of intervals instead of one: define $k_1,\ldots,k_{n-1}$ and $k_1',\ldots,k_{n-1}'$ by letting $k_1,k_1'$ be the spanning arcs of $b_1$, $b_n$ respectively and $k_{i+1}=\phi^i(k_1)$, $k_{i+1}'=\phi^i(k_1')$. Again, the union of the $k_i$ and the $k_i'$ decomposes $S$ into disks $A_1,\ldots, A_{n-1}$ (see Figure~\ref{fig:DnIntervalsEven}). In Figure~\ref{fig:DnSquares} (bottom), the monodromy maps $A_1\mapsto A_2\mapsto\cdots\mapsto A_{n-1}\mapsto A_1$ by translations. The {\tt} graphs for $D_n$ have one vertex at the center of each square and edges pass through the $k_i$ and $k_i'$. Twist lengths on the boundary annuli are $1$, $n-2$ for odd $n$, and $1$, $2$, $\frac{n}{2}-1$ for even $n$.

\begin{figure}[h]
\includegraphics{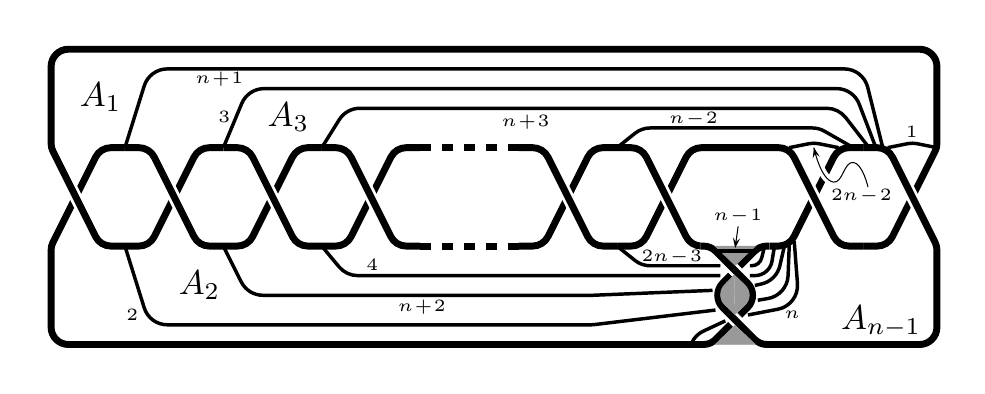}
\caption{Decomposing arcs $k_1,\ldots,k_{2n-2}$ on the fibre surface of $D_n$ for odd $n$.\label{fig:DnIntervalsOdd}}
\includegraphics{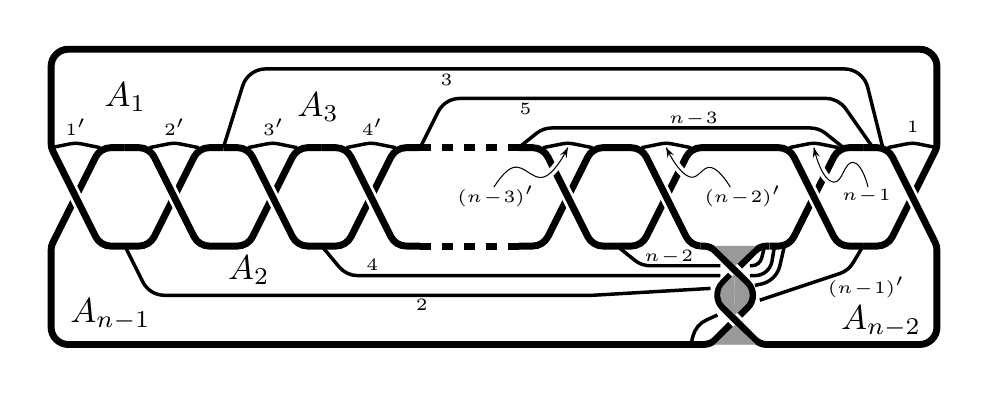}
\caption{Decomposing arcs $k_1,\ldots,k_{n-1},\ k_1',\ldots,k_{n-1}'$ on the fibre surface of $D_n$ for even $n$.\label{fig:DnIntervalsEven}}
\end{figure}

\begin{figure}[h]
\includegraphics{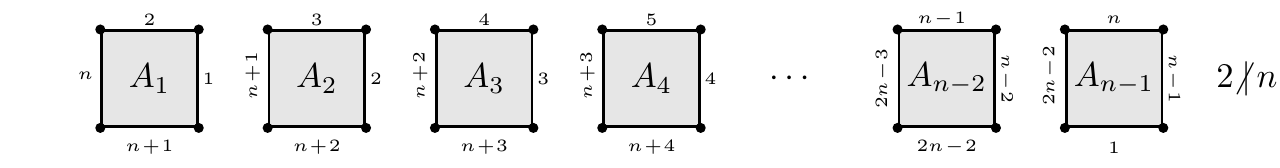}
\includegraphics{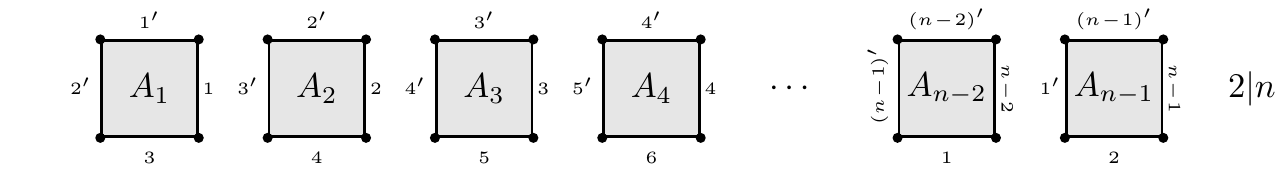}
\caption{Description of the monodromy of $D_n$, for odd $n$ (top) and for even $n$ (bottom).\label{fig:DnSquares}}
\end{figure}

\section{The finite cases.} \label{sec:exceptional}

In~\cite[Corollary~2]{BIRS}, Buck et al. show that $T(2,n)$ admits only finitely many arcs preserving fibredness (up to isotopy). More precisely, they show that every clean arc is isotopic (free on the boundary) to an arc that is contained in one of the disks $A_1,A_2$ from the above description of the monodromy of torus links. Apart from this infinite family of torus links, there are only three more torus links with just a finite number of arcs that preserve fibredness:

\begin{proposition} The torus links $T(3,3)$, $T(3,4)$ and $T(3,5)$ admit, up to isotopy (free on the boundary), only a finite number of cutting arcs that preserve fibredness.
\end{proposition}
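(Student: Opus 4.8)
By Theorem~\ref{thm:rv-cutting-arcs} the monodromy of $T(3,m)$ is right-veering, so a cutting arc preserves fibredness if and only if it is clean, i.e.\ $\alpha\cap\phi(\alpha)=\partial\alpha$ after minimising isotopies. The plan is therefore to count clean arcs up to isotopy (free on the boundary). I would work in the bipartite model of the fibre surface: cutting $S$ along a spanning arc of each band $K_{ij}$ decomposes it into the $3+m$ disks $A_1,A_2,A_3,B_1,\dots,B_m$, one for each vertex of the \tt graph $K_{3,m}$. Recall that $\phi$ permutes these disks by $A_i\mapsto A_{i-1}$ and $B_j\mapsto B_{j+1}$ and sends the band $K_{ij}$ to $K_{i-1,j+1}$ (indices modulo $3$ and $m$).

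Next I would encode arcs combinatorially. After isotoping $\alpha$ into minimal position with respect to the spanning arcs, $\alpha$ is determined up to isotopy by the sequence of bands it crosses --- a walk in the adjacency graph $K_{3,m}$ of the disks --- together with the cyclic position of its crossing points and endpoints on the circles $\partial A_i$, $\partial B_j$. The image $\phi(\alpha)$ then crosses the $\phi$-shifted sequence of bands, obtained by applying $K_{ij}\mapsto K_{i-1,j+1}$. In these terms, cleanness is the purely combinatorial requirement that the walk of $\alpha$ and its $\phi$-shift, realised on $S$, be disjoint away from $\partial\alpha$.

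The crux, and the step I expect to be the main obstacle, is to deduce from this non-intersection condition a uniform bound on the length of the crossing sequence of any clean arc when $m\leq 5$. This is essentially the contrapositive of the construction in the proof of Theorem~\ref{thm:1}: there, an infinite family arose because the subarc $\gamma_3$, which traverses a short cycle of bands, could be repeated arbitrarily often without meeting its monodromy image, and this required enough room ($n,m\geq 4$, or $n=3,m\geq 6$). For $m\in\{3,4,5\}$ I would show that no subarc can be repeated in this way without its iterate crossing the $\phi$-image, using the explicit twist length (two edges to the right) together with the periods $3$ and $m$ of the permutations of the $A_i$ and $B_j$. The threshold at $m=6$ for $n=3$ is exactly what makes the argument go through only in the three small cases, and I expect to verify the resulting finite bound by an explicit, surface-by-surface case check for $T(3,3)$, $T(3,4)$ and $T(3,5)$.

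Finally, once crossing sequences have bounded length there are only finitely many combinatorial types, and for each type the relative positions of the finitely many endpoints and crossing points on the boundary circles give only finitely many isotopy classes. Hence there are finitely many clean arcs, which proves the proposition.
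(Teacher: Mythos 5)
Your overall framework coincides with the paper's: the same bipartite model, the same combinatorial encoding of an arc by its band sequence (the paper's notion of \emph{normal position}, Definition~\ref{def:normalpos}), the same reduction of starting points using the $\phi$-action, and a terminating case check. But there is a genuine gap at exactly the point you flag as the crux, and it is not merely deferred bookkeeping. You assert that, in normal position, ``cleanness is the purely combinatorial requirement that the walk of $\alpha$ and its $\phi$-shift, realised on $S$, be disjoint away from $\partial\alpha$.'' That is unjustified as stated: cleanness is defined after \emph{minimising} isotopies, and bringing $\alpha$ and $\phi(\alpha)$ separately into normal position does not put them in minimal position with respect to each other. An intersection visible in your combinatorial picture could a priori be removable by an isotopy; moreover, during the inductive search you only know an initial segment of $\alpha$, so you cannot check minimality directly --- the paper notes this explicitly (``we do not know whether the mentioned intersection is the only one since we do not know how $\alpha$ ends''). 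The missing content is the bigon analysis of Remarks~\ref{rem:bigons}: in normal position any innermost bigon between $\alpha$ and $\phi(\alpha)$ must be a chain of rectangles running through bands, and from this the paper extracts Lemma~\ref{lem:conflict} and Lemma~\ref{lem:generalisedconflict} (certain local intersection configurations admit no bigon, hence certify uncleanness) and Lemma~\ref{lem:consecutive} (a clean arc traverses at most $\ell$ consecutive bands, with $\ell=2$ for torus links). These are the tools that make each step of the case tree a theorem rather than a picture.

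A second, smaller inaccuracy: you hope for ``a uniform bound on the length of the crossing sequence'' obtained as a contrapositive of the construction in Theorem~\ref{thm:1}. The paper proves no abstract length bound; instead, every branch of the search terminates because at each disk the possible continuations are eliminated one by one, either by Lemma~\ref{lem:consecutive} or by Lemma~\ref{lem:conflict}/Lemma~\ref{lem:generalisedconflict} applied to an unavoidable intersection with $\phi(\alpha)$, and the surviving branches die after boundedly many steps. The heuristic that $m\leq 5$ leaves no room for a repeatable subarc like $\gamma_3$ is correct in spirit, but by itself it does not exclude clean arcs of unbounded length with \emph{non-repeating} band sequences; only the exhaustive, lemma-certified search does. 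Your plan becomes a proof once you supply (i) the bigon-rigidity statements replacing the combinatorial-disjointness claim and (ii) the full case analysis those statements support.
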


For positive tree-like Hopf plumbed surfaces we similarly obtain:

\begin{proposition} The positive tree-like Hopf plumbings associated to any of the Coxeter-Dynkin trees $A_n$, $D_n$, $E_6$, $E_7$ or $E_8$ admit, up to isotopy (free on the boundary), only a finite number of cutting arcs that preserve fibredness.
\end{proposition}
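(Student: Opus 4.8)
The plan is to reduce the statement to a finite combinatorial problem by means of the cleanness criterion of Theorem~\ref{thm:rv-cutting-arcs}. Since the monodromy $\phi$ of a positive Hopf plumbing is right-veering, an arc preserves fibredness if and only if it is \emph{clean}, i.e. $\alpha\cap\phi(\alpha)=\partial\alpha$ after minimising isotopies on $\alpha$ and $\phi(\alpha)$. It therefore suffices to prove that, for each of the trees in question, there are only finitely many isotopy classes (free on the boundary) of clean arcs. Since the trees $A_n$, $E_6$, $E_8$ and $D_4$ give torus links ($T(2,n+1)$, $T(3,4)$, $T(3,5)$ and $T(3,3)$), those cases are covered by the preceding Proposition and by \cite{BIRS}; the genuinely new content is $E_7$ and the $D_n$ family, for which I would use the explicit polygon decompositions of Section~\ref{sec:monodromies}.

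First I would record that each such surface $S$ is cut by the disjoint arcs $k_i$ (together with the $k_i'$ for even $D_n$) into disks $A_1,\ldots,A_N$ that are cyclically permuted by $\phi$, with $\phi(k_i)=k_{i+1}$ and $\phi(A_j)=A_{j+1}$ modulo the appropriate numbers, as in Figures~\ref{fig:E7Hexagons} and~\ref{fig:DnSquares}. Given a clean arc $\alpha$, I would isotope it (free on the boundary) so as to minimise its geometric intersection number with $\bigcup_i k_i$. This cuts $\alpha$ into elementary subarcs, each properly embedded in one polygon $A_j$ with endpoints on $\partial S\cap\partial A_j$ or on the edges $k_i$. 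Because each $A_j$ is a disk whose boundary consists of a bounded number (four, resp. six) of $\partial S$-sides alternating with $k_i$-sides, each elementary subarc is determined up to isotopy in $A_j$ by the unordered pair of sides it joins. Hence there are only finitely many elementary subarc types, and $\alpha$ is encoded, up to isotopy, by the finite cyclic data of the polygons it visits together with the word of edges $k_i$ it crosses in order.

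The heart of the argument is to bound the length of this word using cleanness. Here I would exploit that $\phi$ acts on the encoding simply by shifting all indices by a fixed amount, so the elementary subarcs of $\phi(\alpha)$ in a polygon $A_j$ are exactly the $\phi$-images of the elementary subarcs of $\alpha$ in $A_{j-1}$. Disjointness of $\alpha$ and $\phi(\alpha)$ then forces, in every polygon, the chords coming from $\alpha$ and those coming from $\phi(\alpha)$ to have pairwise unlinked endpoints on the polygon boundary. Reading off the concrete gluing pattern (the identifications of the $k_i$-edges in Figures~\ref{fig:E7Hexagons} and~\ref{fig:DnSquares}), I would show that this unlinking-under-shift constraint is incompatible with any subarc that ``winds'': if $\alpha$ crossed some $k_i$ more than a fixed number of times, one could always locate a chord of $\alpha$ and a chord of $\phi(\alpha)$ whose endpoints interleave on a common polygon, contradicting cleanness. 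This yields a universal bound $K=K(T)$ on $i(\alpha,\bigcup_i k_i)$, and therefore on the length of the encoding, leaving only finitely many clean arcs up to isotopy.

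I expect the main obstacle to be precisely this last combinatorial step: turning ``$\alpha$ is long'' into a guaranteed interleaving of $\alpha$ with its shifted image. The difficulty is that a long arc can in principle arrange its crossings to avoid its image locally, so the bound must come from the global return pattern of the cyclic permutation of polygons together with the twisting encoded by the twist lengths ($1$ and $2$ for $E_7$; $1$ and $n-2$, resp. $1$, $2$ and $\tfrac{n}{2}-1$ for $D_n$). I would handle this by an innermost-subarc, bigon-type reduction: an unavoidable interleaving either contradicts cleanness outright or exposes a reducing isotopy lowering the crossing number, so that a crossing-minimal clean arc is automatically short. Verifying that this reduction terminates uniformly across the $D_n$ family and both parities is where the bookkeeping is heaviest, and I would organise it case by case following the orbit pictures of Figures~\ref{fig:DnIntervalsOdd}--\ref{fig:DnSquares}.
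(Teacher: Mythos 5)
Your framework coincides with the paper's: reduce via Theorem~\ref{thm:rv-cutting-arcs} to counting clean arcs, dispose of $A_n$, $D_4$, $E_6$, $E_8$ through the torus-link cases, put $\alpha$ in a chord-minimal position with respect to the decomposing arcs $k_i$, $k_i'$ (this is the paper's normal position, Definition~\ref{def:normalpos}), encode $\alpha$ by the word of edges it crosses, and play cleanness of $\alpha$ off against its index-shifted image $\phi(\alpha)$. The gap is exactly the step you flag yourself: the claim that a long word forces a chord of $\alpha$ and a chord of $\phi(\alpha)$ to interleave in a common polygon is the entire content of the proof, and the mechanism you propose --- per-polygon unlinking of chords under the shift --- is too weak to deliver it. In $D_n$ an arc really can traverse a long edge sequence ($k_7,k_9,k_{11},\dots$ for on the order of $n/2$ steps, as in the paper's Case~1.3 for even $n$) while its chords stay unlinked from their shifts in every single square it visits; the obstruction is global, and the crossings that eventually kill such arcs can only be certified as non-removable by first proving that, in normal position, any bigon between $\alpha$ and $\phi(\alpha)$ is a chain of rectangles running through the bands (the paper's Remarks~\ref{rem:bigons}). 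Note also that your assertion ``disjointness forces unlinked chords'' presupposes that minimal position of $\alpha$ against $\phi(\alpha)$ and minimal position against $\bigcup_i k_i$ can be achieved simultaneously, which itself needs the normal-position analysis of Remarks~\ref{rem:normalpos}.

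Concretely, the paper supplies three lemmas your sketch is missing. Lemma~\ref{lem:consecutive}: a clean arc in normal position cannot run along more than $\ell$ consecutive bands on one side of the t\^ete-\`a-t\^ete graph, where $\ell$ is the twist length of the corresponding boundary annulus; its proof is dynamical, not purely combinatorial --- $\phi$ rotates the graph boundary of the annulus by $\ell$ edges, so a subarc hugging the graph for $\ell+1$ edges has its image crossing from inside to outside the disk it cuts off, and the rectangle-chain structure of bigons shows this crossing cannot be isotoped away. Lemmas~\ref{lem:conflict} and~\ref{lem:generalisedconflict} then certify that specific crossings of $\alpha$ with $\phi(\alpha)$ admit no bigon because the four relevant endpoints lie in four distinct bands. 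Finiteness finally comes not from a closed-form bound $K(T)$ on $i(\alpha,\bigcup_i k_i)$ but from an exhaustive finite search: starting points are cut down to two $\phi$-orbits of polygon vertices and halved again via involutions $\tau$ with $\phi\circ\tau\circ\phi=\tau$ (Remark~\ref{rem:clean}, a symmetry reduction absent from your proposal), after which every continuation of the edge word is blocked within boundedly many steps by one of the three lemmas. Your innermost-bigon reduction is the right instinct, but without the twist-length input and the rectangle-chain description of bigons there is no argument that it terminates; as written, the proposal is a correct plan whose decisive combinatorial core is conjectured rather than established.
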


The proofs of Propositions~\ref{prop:3345} and \ref{prop:DnE7} are rather technical and will be given in Section~\ref{sec:propproofs}. Nevertheless, the idea is very simple: let $S$ be the fibre surface of any torus link $T(n,m)$, given as thickening of a complete bipartite graph on $n+m$ vertices, or of $D_n$ or $E_7$, as described in Section~\ref{sec:monodromies}. An arc $\alpha\subset S$ is determined up to isotopy by its endpoints and by the sequence of bands $K$ it passes through. Now start listing all possible such sequences that yield clean arcs, for increasing length of the sequence. In order to prove finiteness of this list, we use three Lemmas, also given in Section~\ref{sec:propproofs}. The intuitive meaning of Lemma~\ref{lem:conflict} and Lemma~\ref{lem:generalisedconflict} can be phrased as follows: if $\alpha$ and $\phi(\alpha)$ intersect and this intersection seemingly cannot be removed by an isotopy, then $\alpha$ is indeed unclean. Lemma~\ref{lem:consecutive} asserts that a clean arc cannot stay in the complement of the graph for a distance of more than $\ell$ consecutive bands, where $\ell$ is the {\tt} twist length on the corresponding boundary annulus (for example, $\ell=2$ for all torus links).

This is made precise in Section~\ref{sec:propproofs}, using a notion of {\em arcs in normal position} (cf.\ Definition~\ref{def:normalpos}). Along with this case-by-case analysis, one can find all possible fibred links obtained from $A_{n-1} = T(2,n)$, $D_4=T(3,3)$, $D_n$, $E_6=T(3,4)$, $E_7$ and $E_8=T(3,5)$ by cutting along an arc. Consult Table~\ref{table} for a complete list.

\begin{table}[h]
\begin{tabular}{l|l}
 From     & one obtains by cutting along a clean arc \\ \hline\hline 
 $T(2,n)$ & $T(2,n-1),\quad T(2,m_1)\# T(2,m_2)$ for $m_1+m_2=n$\phantom{$A^{f^t}$} \\ \hline
 $T(3,3)$ & $T(2,4),\quad (T(2,2)\# T(2,2)\# T(2,2))^{*_1}$ \phantom{$A^{f^t}$} \\ \hline
 $T(3,4)$ & $D_5,\quad T(2,6),\quad T(2,5)\# T(2,2)$, \phantom{$A^{f^t}$} \\
          & $T(2,3)\# T(2,3)\# T(2,2),\quad (T(2,3)\# T(2,2)\# T(2,3))^{*_2}$\hspace{-4ex} \\ \hline
 $T(3,5)$ & $E_7,\quad D_7,\quad T(2,8),\quad (D_5\# T(2,3))^{*_3}$,\phantom{$A^{f^t}$} \\
          & $T(2,5)\# T(2,4),\quad T(2,7)\# T(2,2),\quad T(3,4)\# T(2,2)$,\hspace{-3ex} \\
	  & $T(2,5)\# T(2,3)\# T(2,2),\quad (T(2,5)\# T(2,2)\# T(2,3))^{*_4}$\hspace{-4ex} \\ \hline
 $D_n$    & $T(2,n),\quad D_{n-1},\quad D_{m_1}\# T(2,m_2)$ for $m_1+m_2=n$,\phantom{$A^{f^t}$} \hspace{-4ex} \\
          & $T(2,2)\# T(2,2)\# T(2,n-2)$ \\ \hline
 $E_7$    & $E_6,\quad D_6,\quad T(2,7),\quad T(2,4)\# T(2,2)\# T(2,3)$,\phantom{$A^{f^t}$} \hspace{-3ex} \\
          & $T(2,6)\# T(2,2),\quad T(2,5)\# T(2,3)$ \vspace{1ex} \\
\end{tabular}
\begin{flushleft}
$K_1\# K_2$ denotes the connected sum of $K_1$ and $K_2$, $D_n$ denotes the closure of the braid $\sigma_1^{n-2}\sigma_2\sigma_1^2\sigma_2$, $n\geq 3$, and $E_n$ denotes the closure of the braid $\sigma_1^{n-3}\sigma_2\sigma_1^3\sigma_2$, $n=6,7,8$.\\[2ex]
\begin{footnotesize} $^{*_1}$ chain of four successive unknots.\\
$^{*_2}$ both Hopf link components are summed to one trefoil knot each.\\
$^{*_3}$ both possible sums appear (trefoil summed with the unknot component of $D_5$ as well as trefoil summed with the trefoil component of $D_5$).\\
$^{*_4}$ one component of the Hopf link in the middle is summed to $T(2,5)$ and the other is summed to the trefoil.
\end{footnotesize}
\end{flushleft}
\vspace{2ex}
\caption{Fibred links obtained from the exceptional torus links by cutting along an arc. \label{table}}
\end{table}

\section{Arcs for links with infinite order monodromy} \label{sec:infinite}

\setcounter{theorem}{2}
\begin{theorem} \label{thm:3} Let $S$ be a surface obtained by iterated plumbing of positive Hopf bands and suppose the monodromy $\phi:S\to S$ is pseudo-Anosov. Then, $S$ contains infinitely many non-isotopic cutting arcs preserving fibredness.
\end{theorem}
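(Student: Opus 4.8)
The plan is to fix one clean arc and then let the monodromy act on it, exploiting that a pseudo-Anosov map moves every essential arc off its own isotopy class. Concretely, I would first produce a single clean arc $\alpha_0$: since $S$ is an iterated plumbing of positive Hopf bands, it contains at least one plumbed Hopf band, and I take $\alpha_0$ to be its spanning (co-core) arc. Cutting $S$ along $\alpha_0$ deplumbs that band and hence yields another fibre surface, so $\alpha_0$ preserves fibredness; as the monodromy of a positive Hopf plumbing is a product of positive Dehn twists and thus right-veering, Theorem~\ref{thm:rv-cutting-arcs} tells us that $\alpha_0$ is clean, that is, $\alpha_0\cap\phi(\alpha_0)=\partial\alpha_0$ after minimising isotopies. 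The arc $\alpha_0$ is moreover essential: it is not boundary-parallel, since cutting along it genuinely changes the surface.

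By Remark~\ref{rem:clean}, every arc in the orbit $\{\phi^k(\alpha_0):k\in\Z\}$ is again clean, and hence preserves fibredness by Theorem~\ref{thm:rv-cutting-arcs}. It therefore suffices to show that this orbit contains infinitely many distinct isotopy classes of arcs. Since $\phi$ induces a bijection on the set of (free boundary) isotopy classes of essential arcs, with a well-defined inverse, a finite orbit would force $\alpha_0$ to be periodic, i.e.\ $\phi^k(\alpha_0)$ isotopic to $\alpha_0$ for some $k\geq 1$. So the whole statement reduces to the claim that a pseudo-Anosov monodromy admits no periodic essential arc.

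To establish this, I would invoke Thurston's theory: $\phi$ preserves a transverse pair of measured foliations $(\mathcal{F}^u,\mathcal{F}^s)$ with dilatation $\lambda>1$ and $\phi(\mathcal{F}^s)=\lambda^{-1}\mathcal{F}^s$, and this pair fills $S$, so every essential arc $\gamma$ satisfies $i(\gamma,\mathcal{F}^s)>0$ or $i(\gamma,\mathcal{F}^u)>0$. The scaling law $i(\phi^k(\gamma),\mathcal{F}^s)=\lambda^k\, i(\gamma,\mathcal{F}^s)$, together with its analogue for $\mathcal{F}^u$, then shows that these transverse measures grow without bound along the orbit, so no $\phi^k(\alpha_0)$ can be isotopic to $\alpha_0$. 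Alternatively, and more in the spirit of the proof of Theorem~\ref{thm:1}, I could fix an essential simple closed curve $c$ with $i(c,\mathcal{F}^s)>0$ and argue that the geometric intersection numbers $i(\phi^k(\alpha_0),c)$ grow like $\lambda^k$, thereby directly distinguishing infinitely many of the $\phi^k(\alpha_0)$.

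The main obstacle is the careful adaptation of pseudo-Anosov dynamics from simple closed curves to arcs on a surface with boundary: one must work with the invariant foliations relative to $\partial S$ (allowing boundary leaves and boundary singularities), define the intersection pairing between an arc and a measured foliation, and verify that the chosen co-core arc $\alpha_0$ is genuinely transverse to the foliations rather than carried as a leaf, so that at least one of $i(\alpha_0,\mathcal{F}^u)$, $i(\alpha_0,\mathcal{F}^s)$ is strictly positive. Once the filling property and the scaling law are in place for arcs, the infiniteness of the orbit, and hence of the family of fibredness-preserving cutting arcs, follows immediately.
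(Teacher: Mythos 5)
Your proposal is correct and takes essentially the same route as the paper: fix a clean arc associated to the last plumbed Hopf band (the paper checks cleanness directly by observing that only the final Dehn twist in the factorisation of $\phi$ moves the arc, whereas you equivalently deduce it from deplumbing together with Theorem~\ref{thm:rv-cutting-arcs}), propagate cleanness along the orbit via Remark~\ref{rem:clean}, and conclude infinitude of the orbit from pseudo-Anosov dynamics. The only divergence is in the last step, where the paper simply cites the exponential growth of the lengths of $\phi^n(\alpha)$ (\cite{FM}, Section~14.5) while you re-derive the same fact from the invariant measured foliations and the scaling of intersection numbers; do make sure the Hopf band you start with is the \emph{last} one plumbed, so that cutting along its co-core genuinely deplumbs it and returns a fibre surface.
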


\begin{proof} The monodromy $\phi$ is a composition of right Dehn twists along the core curves of the Hopf bands used for the construction of $S$ as a Hopf plumbing. Let $\alpha$ be an arc dual to the core curve of the last plumbed Hopf band and such that $\alpha$ does not enter any of the previously plumbed Hopf bands. Then, in the product of Dehn twists representing $\phi$, only the last factor affects $\alpha$. It follows that $\alpha$ is clean (and therefore $\phi^n(\alpha)$ is also clean by Remark~\ref{rem:clean}). Since $\phi$ is pseudo-Anosov and $\alpha$ is essential, the length of $\phi^n(\alpha)$ (with respect to an auxiliary Riemannian metric) grows exponentially as $n$ tends to infinity (compare \cite{FM}, Section~14.5). In particular, the arcs $\phi^n(\alpha)$ are pairwise non-isotopic and clean.
\end{proof}

In general, it is not enough to require $\phi$ to be non-periodic. Indeed, the family of arcs $\phi^n(\alpha)$ might be finite, even if $\phi$ is of infinite order. This occurs typically when $\phi$ is reducible and $\alpha$ is contained in a periodic reducible piece of $\phi$. However, if we dispose of a homology class $[c]\in H_1(S,\Z)$ whose coordinate dual to $\alpha$ grows (i.e. $i(\phi^n(c),\alpha)\to\infty$ for $n\to\infty$), then the family $\phi^{-n}(\alpha)$ contains infinitely many distinct arcs since $i(\phi^n(c),\alpha) = i(c,\phi^{-n}(\alpha))\to\infty$.

\begin{proposition} \label{prop:infinitetrees} Let $S$ be a surface obtained by plumbing positive Hopf bands according to a tree, other than $A_n$, $D_n$, $E_6$, $E_7$ and $E_8$. Then, $S$ contains infinitely many non-isotopic cutting arcs preserving fibredness.
\end{proposition}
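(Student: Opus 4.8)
The plan is to show that for a non-ADE tree the monodromy $\phi$ has infinite order, and then to produce infinitely many clean (fibredness-preserving) arcs by two complementary mechanisms: Theorem~\ref{thm:3} when $\phi$ is pseudo-Anosov, and the homological growth criterion following it when $\phi$ is reducible of infinite order. Which case occurs is read off from the tree. Writing $C = 2I - \mathrm{Adj}(T)$ for the associated symmetric form on $H_1(S;\Z)=\Z\langle e_1,\dots,e_n\rangle$, it is classical that $\phi$ is periodic exactly when $C$ is positive definite, that is, when $T$ is one of $A_n, D_n, E_6, E_7, E_8$. For any other connected tree $C$ is either positive semidefinite with one-dimensional radical (the affine trees $\tilde D_n, \tilde E_6, \tilde E_7, \tilde E_8$) or indefinite; the eigenvalues of the homological monodromy $\phi_*$ are the roots of the Alexander polynomial, equal to those of the Coxeter transformation of $T$, so in both non-ADE cases $\phi_*$, and hence $\phi$, has infinite order.

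When $C$ is indefinite the monodromy is pseudo-Anosov, and Theorem~\ref{thm:3} applies verbatim: taking $\alpha$ dual to the last plumbed band gives infinitely many pairwise non-isotopic clean arcs $\phi^n(\alpha)$. This settles all indefinite trees.

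It remains to treat the affine trees, where $\phi$ is reducible and Theorem~\ref{thm:3} is unavailable, so I would use the homological criterion. In this case every eigenvalue of $\phi_*$ lies on the unit circle while $\phi_*$ has infinite order, so after replacing $\phi$ by a fixed power (harmless, by Remark~\ref{rem:clean} and since the conclusion is insensitive to it) we may assume $\phi_*$ is unipotent with a single nontrivial Jordan block. Its eigenvector is the radical class $c_0=\sum_i m_i e_i$ of $C$, whose marks $m_i$ are all strictly positive, and $c_0$ has a Jordan partner $c$ with $\phi_* c = c + c_0$, whence $\phi_*^n c = c + n c_0$. Choosing the plumbing order so that a leaf $e_\ell$ is plumbed last and letting $\alpha$ be the arc dual to $e_\ell$, the arc $\alpha$ is clean by the argument in the proof of Theorem~\ref{thm:3}, and so is every $\phi^{-n}(\alpha)$ by Remark~\ref{rem:clean}. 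Since $\langle e_i,\alpha\rangle=\delta_{i\ell}$, one computes
\[
 i(\phi^n(c),\alpha)=\langle\phi_*^n c,\alpha\rangle=i(c,\alpha)+n\,m_\ell\longrightarrow\infty ,
\]
so $i(c,\phi^{-n}(\alpha))\to\infty$ and the clean arcs $\phi^{-n}(\alpha)$ are pairwise non-isotopic.

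The main difficulty lies in the affine case: one must make the reducible monodromy explicit enough to exhibit $c_0$ and its Jordan partner $c$ inside $H_1(S;\Z)$ itself (not merely in the abstract root lattice), to confirm that $\phi_*$ carries the predicted unipotent block, and to check $m_\ell\neq 0$ for the chosen leaf---this last point being guaranteed by positivity of the affine marks. A secondary subtlety is the identification of the pseudo-Anosov/reducible alternative with the indefinite/semidefinite alternative: one needs that, for these filling systems of positive Dehn twists, indefiniteness of $C$ forces a genuine pseudo-Anosov rather than a reducible map with a pseudo-Anosov piece. Were that to fail, the same homological argument applies with the Perron eigenvector of $\phi_*$ replacing $c_0$ and exponential in place of linear growth, giving a uniform fallback.
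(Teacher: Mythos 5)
Your overall mechanism is the paper's: use A'Campo's spectral results for the Coxeter transformation to get growth of $i(\phi^n(c),\alpha)=i(c,\phi^{-n}(\alpha))$ against a clean arc $\alpha$ dual to a plumbing curve, with exponential growth off the affine trees and linear growth (Jordan block) on them. But your primary route differs in the non-affine case, and there it has a genuine gap: you assert that indefiniteness of $C=2I-\mathrm{Adj}(T)$ forces the geometric monodromy to be pseudo-Anosov, so that Theorem~\ref{thm:3} applies. No such implication is available: the monodromy here is a product of \emph{positive} twists along a filling configuration (so Penner-type criteria do not apply --- indeed the same kind of configuration gives periodic maps in the ADE cases), and for general non-affine trees the monodromy may well be reducible with infinite order, which is exactly the failure mode the paper warns about in the paragraph preceding the proposition. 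The paper sidesteps Nielsen--Thurston classification entirely: by \cite{AC1,AC2}, $\phi_*$ has a real eigenvalue $\lambda$ with $|\lambda|>1$ for every non-spherical, non-affine tree, and the homological growth argument (your ``fallback'') is run uniformly. So your fallback is not a fallback --- it is the actual proof, and your main line should be discarded rather than repaired.

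Two smaller points of comparison. First, cleanliness: you obtain it only for an arc dual to the \emph{last-plumbed} band, forcing $j$ to be a leaf and obliging you to check the leaf coordinate of the eigenvector is nonzero (easy for the affine null vector with its positive marks, but an extra verification for the leading eigenvector, where ``Perron'' is loose since the Coxeter matrix is not nonnegative). The paper instead observes that the spanning arc of the Hopf band with core $e_j$ is clean for \emph{any} vertex $j$, because cutting along it yields a connected sum of positive tree-like Hopf plumbings, which is fibred; combined with Theorem~\ref{thm:rv-cutting-arcs} this gives complete freedom to choose any $j$ with nonzero coordinate. Second, in the affine case the paper records the Jordan block of $\phi_*$ at eigenvalue $-1$, whereas you pass to a power to make it unipotent; by Remark~\ref{rem:clean} this is immaterial, and your computation $\phi_*^n c = c + n c_0$ with positive marks is the intended ``similar reasoning.''
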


\begin{proof}
Let $S$ be a surface obtained by positive tree-like Hopf plumbing. Denote the induced action of the monodromy on $H_1(S,\Z)$ by $\phi_*$, and let $e_1,\ldots,e_n\in H_1(S,\Z)$ be the basis vectors represented by the core curves of the Hopf bands used in the plumbing construction. It follows from A'Campo's work on the spectrum of Coxeter transformations (\cite{AC1}) and slalom knots (\cite{AC2}), that $\phi_*$ has a real eigenvalue $\lambda$ with $|\lambda|>1$ if the tree corresponds to neither spherical nor affine Coxeter systems. Let $c$ be an eigenvector of $\phi_*$ for the eigenvalue $\lambda$. Then the sequence $\phi_*^n(c)=\lambda^n c$ is unbounded. Choose $j\in\{1,\ldots,n\}$ such that the $j$-th coordinate of $c$ is nonzero. Let $\alpha\subset S$ be a spanning arc of the Hopf band with core curve $e_j$. Then $\alpha$ is clean, since cutting along $\alpha$ yields a connected sum of positive tree-like Hopf plumbings, which is fibred. Moreover we have $|i(\phi^n(c),\alpha)|\to\infty$ for $n\to\infty$ by construction. It therefore remains to study the affine Coxeter-Dynkin trees. For these, the spectral radius of $\phi_*$ is equal to one. However, $\phi_*$ has a Jordan block to the eigenvalue $-1$ in these cases, and a similar reasoning applies.
\end{proof}

\begin{proof}[Proof of Theorem~\ref{thm:2}]
Combine Propositions~\ref{prop:DnE7} and~\ref{prop:infinitetrees}.
\end{proof}

\section{Proof of Propositions~\ref{prop:3345} and~\ref{prop:DnE7}} \label{sec:propproofs}

\setcounter{proposition}{0}
\begin{proposition} \label{prop:3345} The torus links $T(3,3)$, $T(3,4)$ and $T(3,5)$ admit, up to isotopy (free on the boundary), only a finite number of cutting arcs that preserve fibredness.
\end{proposition}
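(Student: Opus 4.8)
The plan is to reformulate the statement via Theorem~\ref{thm:rv-cutting-arcs}: since the monodromy $\phi$ of any positive braid link is right-veering, an arc $\alpha$ preserves fibredness precisely when it is \emph{clean}, that is, $\alpha\cap\phi(\alpha)=\partial\alpha$ after minimising isotopies. It therefore suffices to show that each of $T(3,3)$, $T(3,4)$, $T(3,5)$ carries only finitely many clean arcs up to isotopy. I would work in the bipartite-graph model of the fibre surface from Section~\ref{sec:monodromies}, in which $\phi$ acts combinatorially by $k_{ij}\mapsto k_{i-1,j+1}$ (indices modulo $3$ and $m$). After placing $\alpha$ in normal position (Definition~\ref{def:normalpos}), the arc is encoded by the finite word recording the sequence of bands $K_{ij}$ it traverses, together with its two endpoints on $\partial S$; the image $\phi(\alpha)$ is then encoded by the shifted word. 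The whole problem becomes the combinatorial one of listing all admissible words.

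First I would bound the local complexity of a clean arc. By Lemma~\ref{lem:consecutive}, with \tt\ twist length $\ell=2$ for all torus links, a clean arc cannot run through more than two consecutive bands in the complement of the \tt\ graph before turning at a vertex $a_i$ or $b_j$. This caps the length of each run of consecutive bands in the encoding word, and since $n=3$ every vertex $b_j$ meets only three bands, so the admissible transitions at each turn are few. I would then carry out the enumeration sketched in Section~\ref{sec:exceptional}: list candidate band sequences of increasing length, at each step discarding those for which $\alpha$ and $\phi(\alpha)$ are forced to meet in their interiors. This pruning is performed by Lemma~\ref{lem:conflict} and Lemma~\ref{lem:generalisedconflict}, which certify that certain configurations of $\alpha$ against its shifted image produce an essential intersection point that no isotopy can remove, so that the corresponding word is unclean and may be dropped.

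The crux is to prove that this enumeration terminates for $m\in\{3,4,5\}$, in contrast to the cases $m\geq 6$ treated in Theorem~\ref{thm:1}. The mechanism I expect is that on the comparatively small surfaces $T(3,m)$, $m\leq 5$, any sufficiently long admissible word must revisit a band pattern in such a way that, after the index shift $k_{ij}\mapsto k_{i-1,j+1}$, a subarc of $\phi(\alpha)$ is placed across a subarc of $\alpha$; one of the conflict lemmas then applies and rules the word out. Concretely, I would exhibit, for each of the three links, an explicit length beyond which every candidate word triggers such a forced conflict, so that only words below that bound survive. This is the main obstacle and is genuinely computational: it requires checking the finitely many branches of the search tree by hand while tracking the cyclic index bookkeeping modulo $3$ and $m$.

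Finally, having produced the complete finite list of clean arcs for $T(3,3)$, $T(3,4)$ and $T(3,5)$, I would cut the fibre surface along each of them and identify the resulting fibred link, thereby filling in the corresponding rows of Table~\ref{table} and, in particular, establishing the claimed finiteness.
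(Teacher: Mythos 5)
Your proposal follows essentially the same route as the paper: normal position in the bipartite-graph model, encoding a clean arc by its sequence of bands, bounding runs of consecutive bands via Lemma~\ref{lem:consecutive} (with twist length $\ell=2$), and pruning the enumeration with Lemmas~\ref{lem:conflict} and~\ref{lem:generalisedconflict} until every branch dies, which is exactly the case-by-case tree search the paper carries out for $T(3,5)$ (the hardest case, with $T(3,3)$ and $T(3,4)$ analogous) before reading off Table~\ref{table}. The only cosmetic difference is that the paper exhausts the search tree branch by branch rather than exhibiting a single uniform length bound, and it additionally shortens the analysis by using the cyclic symmetry of $\phi$ to normalise the starting point of $\alpha$; neither changes the substance of the argument.
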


\begin{proposition} \label{prop:DnE7} The positive tree-like Hopf plumbings associated to any of the Coxeter-Dynkin trees $A_n$, $D_n$, $E_6$, $E_7$ or $E_8$ admit, up to isotopy (free on the boundary), only a finite number of cutting arcs that preserve fibredness.
\end{proposition}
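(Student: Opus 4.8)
The plan is to prove finiteness by showing that every clean arc admits a representative of bounded combinatorial complexity, so that only finitely many isotopy classes can occur. I would work with the models of the fibre surfaces from Section~\ref{sec:monodromies}: the bipartite-graph thickening for $T(3,3)$, $T(3,4)$, $T(3,5)$, and the banded-disk model for $D_n$ (together with the fact that $A_n$ and $E_6,E_8$ are already covered as torus links). The central tool is Theorem~\ref{thm:rv-cutting-arcs}: because these monodromies are right-veering, an arc $\alpha$ preserves fibredness if and only if $\alpha\cap\phi(\alpha)=\partial\alpha$ after minimising isotopies. So the whole problem reduces to counting \emph{clean} arcs.

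First I would encode an arc $\alpha$, up to isotopy fixing the boundary, by the cyclic word recording the sequence of bands $K_{ij}$ (resp.\ the arcs $k_i$, $k_i'$ in the $D_n$/$E_7$ decompositions) that it traverses, together with data about its endpoints on $\partial S$. The action of $\phi$ on this combinatorial data is explicit from Section~\ref{sec:monodromies}: on torus links $\phi(k_{ij})=k_{i-1,j+1}$, and on the $ADE$ surfaces $\phi$ cyclically permutes the decomposing arcs and the polygons $A_j$. The key point to establish is that the cleanness condition is a \emph{local, propagating} constraint on this word. Concretely, I would formulate and use the three lemmas advertised in Section~\ref{sec:exceptional}: Lemma~\ref{lem:conflict} and Lemma~\ref{lem:generalisedconflict} should say that if two consecutive letters of the word for $\alpha$ force $\alpha$ and $\phi(\alpha)$ into an essential intersection, then $\alpha$ is unclean; and Lemma~\ref{lem:consecutive} should say that a clean arc cannot travel through more than $\ell$ consecutive bands in a boundary annulus, where $\ell$ is the twist length ($\ell=2$ for all torus links, and $1,2,\tfrac{n}{2}-1$ etc.\ for $D_n$). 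Making ``arcs in normal position'' (Definition~\ref{def:normalpos}) precise is what lets these statements be checked band-by-band rather than globally.

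The proof then proceeds as a finite, mechanical enumeration: one lists all admissible combinatorial words of increasing length that satisfy the cleanness constraints, and shows the list terminates. The twist-length bound from Lemma~\ref{lem:consecutive} caps how long an arc may dwell near each boundary component, and the conflict lemmas rule out the long ``spiralling'' words that produced the infinite families in Theorem~\ref{thm:1}; together these force the word length to be bounded by a constant depending only on the surface. Because the surfaces $T(3,3)$, $T(3,4)$, $T(3,5)$, $D_n$ and $E_7$ have bounded twist lengths (in the $D_n$ case the relevant twist lengths stay small while only the number of polygons grows), the enumeration closes after finitely many steps, and along the way one reads off the resulting fibred links, giving Table~\ref{table}. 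For $A_n=T(2,n)$ one simply invokes the result of Buck et al.\ quoted in Section~\ref{sec:exceptional}, and $E_6,E_8$ are handled by Proposition~\ref{prop:3345}.

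The main obstacle I anticipate is the case analysis itself: the conflict lemmas must be robust enough to detect \emph{every} essential intersection between $\alpha$ and $\phi(\alpha)$ from the combinatorics alone, including intersections that appear only after minimising isotopy, and one must be careful that the normal-position representative is genuinely canonical so that distinct words correspond to distinct isotopy classes. The delicate point is the interface between two boundary annuli of different twist length (especially for $E_7$, with lengths $1$ and $2$, and for $D_n$): an arc may pass from one annulus to another, and I would need to verify that the propagation argument of Lemma~\ref{lem:consecutive} still bounds the total length across such transitions rather than only within a single annulus. Once that uniform bound is in hand, finiteness is immediate and the enumeration is purely bookkeeping.
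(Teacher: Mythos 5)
Your proposal follows essentially the same route as the paper: reduce to clean arcs via Theorem~\ref{thm:rv-cutting-arcs}, encode normal-position arcs (Definition~\ref{def:normalpos}) by the sequence of bands traversed, terminate the enumeration with Lemmas~\ref{lem:conflict}, \ref{lem:generalisedconflict} and~\ref{lem:consecutive}, and quote \cite{BIRS} for $A_n$ and Proposition~\ref{prop:3345} for the torus-link cases $E_6$, $E_8$ and $D_4$; the paper additionally shortens its case analysis by symmetry reductions, using involutions $\tau$ with $\phi\circ\tau\circ\phi=\tau$ via Remark~\ref{rem:clean}, but that is a convenience rather than an essential idea. One small correction: for $D_n$ the twist lengths do \emph{not} stay small (they are $1$, $n-2$ for odd $n$ and $1$, $2$, $\frac{n}{2}-1$ for even $n$), so the long spiralling words are ruled out not by Lemma~\ref{lem:consecutive} alone but by the paper's propagation argument, in which the local situation reproduces with indices shifted by two until the arc is trapped after at most $n/2$ steps --- exactly the delicate point you flagged, and one that still leaves the bound finite for each fixed surface.
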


Before we begin with the proofs, some notation and remarks are necessary. Let $S$ be the fibre surface of either $T(n,m)$, $D_n$ or $E_7$, and let $\phi:S\to S$ be the monodromy. Precisely as in Section~\ref{sec:monodromies}, we decompose $S$ into finitely many disjoint polygonal disks $A_i$ (and $B_j$ in the case of torus links) that are glued using bands ($K_{ij}$ for the torus links and neighbourhoods of the $k_i$, $k_i'$ for $D_n$ and $E_7$). We use the letter $D$ to denote any of the disks and the letter $K$ to denote any of the bands. Let $U$ be the union of all the disks, and let $N\subset S$ be the neighbourhood of the {\tt} graph on which $\phi$ is assumed to be periodic.

\begin{definition} \label{def:normalpos}
An arc $\alpha\subset S$ is in {\em normal position} if the following conditions hold:
\begin{enumerate}[leftmargin=*]
 \item The endpoints of $\alpha$ lie in $\partial U$.
 \item \label{def:normalpos:b} For every band $K$, $\alpha\cap K\setminus U$ consists of finitely many straight segments parallel to the edges of the {\tt} graph.
 \item \label{def:normalpos:c} The number of such segments in $K$ is minimal among all arcs isotopic to $\alpha$.
 \item $\alpha$ intersects the graph transversely in finitely many points of $U$.
 \item $\alpha\setminus N \subset U$, that is, before $\alpha$ enters $N$ and after it leaves $N$, it stays in the disks that contain its endpoints.
 \item \label{def:normalpos:f} $\alpha\cap U$ consists of finitely many straight arcs.
\end{enumerate}
\end{definition}

\setcounter{remarks}{\value{remark}}
\begin{remarks}[on normal position] \label{rem:normalpos} \hfill
\begin{itemize}[leftmargin=*]
 \item Any arc can be brought into normal position by a free isotopy.
 \item If $\alpha$ is in normal position, then $\phi(\alpha)$ can be brought into normal position keeping $N$ fixed. Indeed, it suffices to straighten the two subarcs $\phi(\alpha)\setminus N$ (or, undoing the twisting that occurs in the respective annuli), sliding the endpoints of $\phi(\alpha)$ along $\partial S$, see Figure~\ref{fig:slide}.

\begin{figure}[h]
\includegraphics{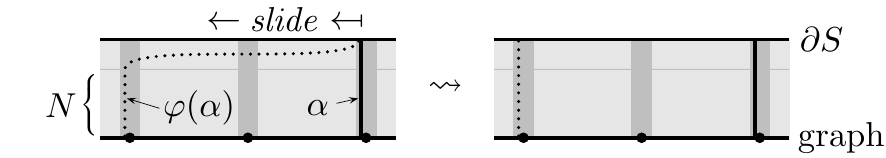}
\caption{How to bring $\phi(\alpha)$ in normal position, keeping $N$ fixed.\label{fig:slide}}
\end{figure}

 \item If $\alpha$ and $\phi(\alpha)$ are in normal position as above, we may isotope $\phi(\alpha)$ with endpoints fixed and keeping it in normal position, such that $\alpha$ and $\phi(\alpha)$ intersect transversely in finitely many points of $U$. In particular, the sets $\alpha\setminus U$ and $\phi(\alpha)\setminus U$ are now disjoint (cf.\ Figure~\ref{fig:transverse}).

\begin{figure}[h]
\includegraphics{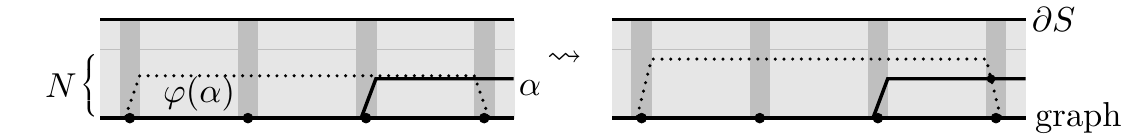}
\caption{How to make $\alpha,\phi(\alpha)$ intersect transversely, keeping normal position.\label{fig:transverse}}
\end{figure}

 \item Let $\alpha$ be in normal position and suppose it passes through at least one band. Let $K$ be the first (respectively last) band traversed by $\alpha$ after (before) it starts (ends) at a boundary point $p$ of one of the disks, say $D$. Then, $p$ cannot lie between $K$ and one of the two bands adjacent to $K$ on $\partial D$. Otherwise, an isotopy sliding the starting point (endpoint) of $\alpha$ along $\partial K$ would decrease the number of segments in $K$, contradicting part~(\ref{def:normalpos:c}) of Definition~\ref{def:normalpos}.
\end{itemize}
\end{remarks}

\begin{remarks}[compare the {\em bigon criterion}, Prop.~1.7 in~\cite{FM}] \label{rem:bigons}
Suppose $\alpha$ intersects $\phi(\alpha)$. If $\alpha$ is clean, there must be a bigon $\Delta\subset S$ whose sides consist of a subarc of $\alpha$ and a subarc of $\phi(\alpha)$. If $\alpha,\phi(\alpha)$ are in normal position, such $\Delta$ takes a particularly simple form:
\begin{itemize}[leftmargin=*]
 \item $\Delta$ cannot be contained in $U$ (i.e., in one of the disks $A_i$ or $B_j$). This would contradict part~(\ref{def:normalpos:f}) of the above Definition~\ref{def:normalpos}.
 \item None of the two sides of $\Delta$ is contained in $U$, since the other side of $\Delta$ would have to leave $U$ through one of the bands $K$ and return through the same $K$. The disk $\Delta$ would then yield an isotopy reducing the number of segments of $\alpha\cap K$ or $\phi(\alpha)\cap K$, contradicting part~(\ref{def:normalpos:c}) of Definition~\ref{def:normalpos}.
 \item For every band $K$, $\Delta\cap K\setminus U$ consists of rectangles with two opposite sides parallel to the edge passing through $K$.
 \item $\Delta\cap U$ constists of topological disks $\delta$ connected to at least one rectangle.
 \item Construct a spine $T$ for $\Delta$ as follows: put a vertex for each $\delta$ and connect two vertices by an edge if the corresponding disks $\delta$ connect to the same rectangle. $T$ is a tree, for $\Delta$ is contractible. Two of its vertices correspond to the vertices of the bigon $\Delta$. Among the other vertices of $T$, there is none of degree one because the adjacent edge would correspond to a rectangle in some $K$ whose sides parallel to its core edge both belong to the same arc ($\alpha$ or $\phi(\alpha)$). In other words, either $\alpha$ or $\phi(\alpha)$ would pass through $K$ and immediately return through $K$ in the opposite direction. This would contradict part~(\ref{def:normalpos:c}) of Definition~\ref{def:normalpos}. Therefore, $T$ is a line consisting of some number of consecutive edges, and the two extremal vertices correspond to the vertices of $\Delta$.
\end{itemize}
\end{remarks}

\begin{lemma} \label{lem:conflict} Let $\alpha,\phi(\alpha)$ be in normal position and suppose they intersect in a point $p\in D$, where $D$ is one of the disks $A_i$ (or $B_j$ in the torus link case). Let $\alpha',\alpha''$ be the components of $\alpha\cap D, \phi(\alpha)\cap D$ containing $p$. If no two of the four points $\partial\alpha'\cup\partial\alpha''\subset\partial D$ lie in the same band $K$, then $\alpha$ cannot be clean.
\end{lemma}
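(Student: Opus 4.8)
The plan is to argue by contradiction, extracting from cleanness the bigon of Remarks~\ref{rem:bigons} and showing that its vertices can never sit at $p$. So suppose $\alpha$ is clean. By Theorem~\ref{thm:rv-cutting-arcs} this means $\alpha$ and $\phi(\alpha)$ become disjoint away from $\partial\alpha$ after minimising isotopies; since in normal position they still meet at $p$, they are not in minimal position, and Remarks~\ref{rem:bigons} furnishes a bigon $\Delta$ whose spine is a line and whose two vertices are interior points of $\alpha\cap\phi(\alpha)$.

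First I would record the local shape of $\Delta$ at a vertex. Let $q$ be a vertex of $\Delta$, lying in a disk $D_q$. By the line-spine description in Remarks~\ref{rem:bigons}, the component $\delta$ of $\Delta\cap D_q$ containing $q$ is extremal, so it connects to exactly one rectangle, i.e.\ to a single band $K$. Both sides of $\Delta$ --- one a subarc of $\alpha$, the other of $\phi(\alpha)$ --- leave $\delta$ through that same band $K$. Hence the two straight segments $\alpha\cap D_q$ and $\phi(\alpha)\cap D_q$ passing through $q$ each have an endpoint on $\partial D_q\cap K$, so two of their four endpoints lie in the same band. Comparing with the hypothesis, at $p$ the four endpoints of $\alpha'$ and $\alpha''$ lie in four distinct bands (indeed they alternate around $\partial D$, since by Definition~\ref{def:normalpos} the arcs $\alpha',\alpha''$ are straight segments crossing once). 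Therefore $p$ is not a vertex of any bigon of this type.

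It remains to turn ``$p$ is no bigon vertex'' into ``$p$ is essential''. For this I would choose the bigon $\Delta$ innermost, so that its interior and the interiors of its two sides contain no further intersection points; its vertices are then the only points of $\alpha\cap\phi(\alpha)$ on $\Delta$, and by the previous paragraph they differ from $p$, whence $p$ lies entirely off $\Delta$. Removing $\Delta$ by the usual isotopy, supported in a neighbourhood of $\Delta$ and followed by a local straightening, strictly decreases the number of interior intersections, restores normal position, and --- being localised away from $p$ --- leaves the segments $\alpha',\alpha''$ and their four band-endpoints unchanged. Iterating, every step deletes two vertices but never $p$, so $p$ persists in every configuration; this contradicts cleanness, which drives the interior intersection number down to zero.

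The main obstacle is the last, global step: a single bigon need not involve $p$, so one cannot conclude directly that $p$ is essential. I expect to handle this exactly as sketched, by always selecting an innermost bigon (which, by the vertex analysis, is automatically disjoint from $p$) and performing all supporting isotopies away from $p$, so that the ``four distinct bands'' hypothesis is preserved verbatim at each stage. The remaining point to verify carefully is that bigon removal together with re-straightening keeps the arcs in the normal position of Definition~\ref{def:normalpos} without moving $p$; this follows from the locality of the isotopies and the first items of Remarks~\ref{rem:normalpos}.
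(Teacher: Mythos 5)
Your proof is correct and uses exactly the paper's ingredients --- the rectangle-and-disk structure of bigons from Remarks~\ref{rem:bigons} and the observation that at a bigon vertex both sides must leave the disk through one and the same band --- but you run the argument contrapositively. The paper removes bigons until it is left with one having vertex $p$ and derives the contradiction there (two of the four points of $\partial\alpha'\cup\partial\alpha''$ would lie in one band); you instead show $p$ can \emph{never} be a bigon vertex, so the intersection at $p$ can never be removed, and infinite descent on the strictly decreasing, always positive intersection number contradicts cleanness. What your version buys is that the persistence step, which the paper compresses into the phrase ``after possibly removing a certain number of such bigons,'' is made explicit. One claim in your write-up is too strong, though your own vertex analysis repairs it: an innermost bigon disjoint from $p$ may still have a vertex $q\neq p$ lying \emph{on} $\alpha'$ or $\alpha''$ (these segments may meet the other arc in further points of $D$), and removing that bigon then does modify the segment, contrary to your assertion that the isotopy ``leaves the segments $\alpha',\alpha''$ and their four band-endpoints unchanged.'' The hypothesis survives anyway: at such a vertex $q$ both sides of the bigon exit $D$ through the same band, so the pushed-across replacement leaves $D$ through the very band the old segment used; hence the four band-endpoints --- all that the hypothesis of Lemma~\ref{lem:conflict} concerns --- are preserved, and after re-straightening as in Definition~\ref{def:normalpos} the two chords through $p$ still have linked endpoint pairs on $\partial D$ and still cross exactly once, so $p$ persists. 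With that sentence added your induction closes; note that the paper's own proof tacitly relies on the same persistence observation.
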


\begin{figure}[h]
\includegraphics{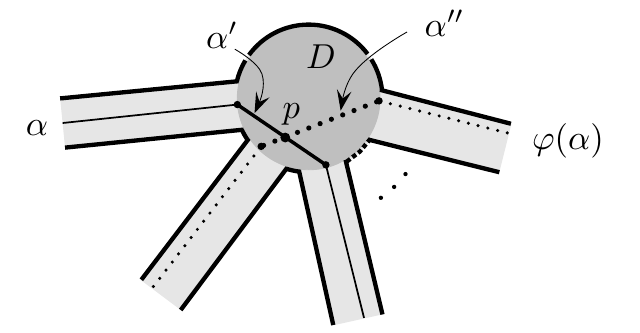}
\caption{$\alpha$ cannot be clean by Lemma~\ref{lem:conflict}.\label{fig:conflict}}
\end{figure}

\setcounter{remark}{\value{remarks}}
\begin{remark} \label{rem:conflict} Note that we did not exclude the possibility that one of the endpoints of $\alpha$ or $\phi(\alpha)$ lie in $\partial\alpha'\cup\partial\alpha''$.
\end{remark}

\begin{proof}[Proof of Lemma~\ref{lem:conflict}]
If $\alpha$ were clean, there would be a bigon. After possibly removing a certain number of such bigons, we are left with a bigon $\Delta$ with vertex $p$. By Remark~\ref{rem:bigons}, $\Delta$ has to leave $D$ through one of the adjacent bands. Since one of the sides of $\Delta$ is a subarc of $\alpha$ and the other side is a subarc of $\phi(\alpha)$, we find two points among $\partial\alpha'\cup\partial\alpha''$ that lie in this band, contradicting the assumption on $\alpha',\alpha''$. 
\end{proof}

\begin{figure}[h]
\includegraphics{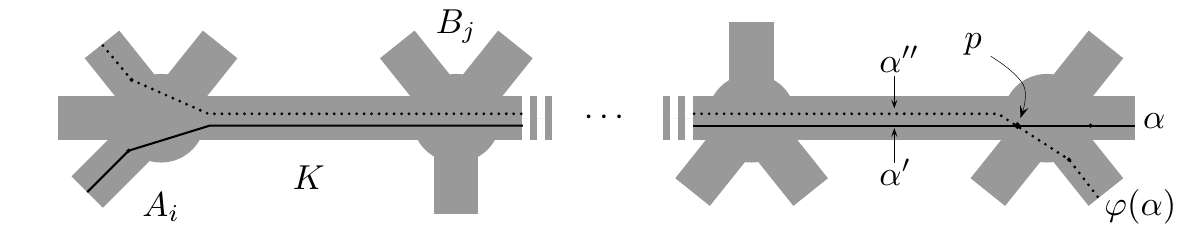}
\caption{$\alpha$ cannot be clean by Lemma~\ref{lem:generalisedconflict}.\label{fig:generalisedconflict}}
\end{figure}

\begin{lemma} \label{lem:generalisedconflict}
Let $\alpha,\phi(\alpha)$ be in normal position and let $\alpha',\alpha''$ be subarcs of $\alpha,\phi(\alpha)$ respectively (not necessarily contained in $U$). Suppose that the four endpoints of $\alpha'$ and $\alpha''$ are contained in $\partial U$ and that no two of them lie on the same band $K$. We further assume that $\alpha'$ and $\alpha''$ intersect in exactly one point and that $\alpha',\alpha''$ run through the same bands (see Figure~\ref{fig:generalisedconflict}). Then $\alpha$ cannot be clean.
\end{lemma}

\begin{proof}
Assume $\alpha'\cap\alpha''=\{p\}$, then $p\in U$. As in the proof of Lemma~\ref{lem:conflict}, study a bigon $\Delta$ that starts at $p$. $\Delta$ consists of a sequence of rectangles as described in Remarks~\ref{rem:bigons}. Starting at $p$, it therefore has to pass through the same bands as $\alpha'$ and $\alpha''$. Since $p$ was the only intersection between $\alpha'$ and $\alpha''$, $\Delta$ has to pass through at least one more band. But this is impossible by the assumption on the endpoints of $\alpha'$ and $\alpha''$.
\end{proof}

\begin{lemma} \label{lem:consecutive} A clean arc in normal position cannot traverse more than $\ell$ consecutive bands along a complementary annulus of twist length $\ell$.
\end{lemma}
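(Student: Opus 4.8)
The plan is to argue by contradiction. Suppose $\alpha$ is a clean arc in normal position that traverses $\ell+1$ consecutive bands $K_0,K_1,\dots,K_\ell$ along a complementary annulus $\mathcal{A}$ of twist length $\ell$; I will produce a geometric intersection of $\alpha$ with $\phi(\alpha)$ beyond the endpoints, contradicting the cleanness criterion of Theorem~\ref{thm:rv-cutting-arcs}. First I would isolate the offending subarc $\beta\subset\alpha$ realising the traversal, so that $\beta$ runs $D_0,K_0,D_1,K_1,\dots,D_\ell,K_\ell,D_{\ell+1}$, where $D_r$ is the disk between $K_{r-1}$ and $K_r$ along the inner edge-cycle of $\mathcal{A}$. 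Using the third item of Remarks~\ref{rem:normalpos} I bring $\alpha$ and $\phi(\alpha)$ into normal position meeting transversely inside $U$.

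The key input is the periodicity of $\phi$ on $N$: on $\mathcal{A}$ the monodromy rotates the edge-cycle by exactly $\ell$ edges, so that $\phi(K_r)=K_{r+\ell}$ and $\phi(D_r)=D_{r+\ell}$ as bands and disks of $\mathcal{A}$. Consequently $\phi(\beta)$ runs through $K_\ell,K_{\ell+1},\dots,K_{2\ell}$; its band-sequence is that of $\beta$ shifted by $\ell$. Since $\beta$ spans one band more than the shift, the two sequences overlap precisely in the single band $K_\ell$ together with the adjacent disks $D_\ell$ and $D_{\ell+1}$: the shift by $\ell$ is not quite enough to push $\phi(\beta)$ clear of $\beta$. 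This also explains why the threshold is exactly $\ell$, since for a traversal of only $\ell$ bands the shifted band-sequence $K_\ell,\dots,K_{2\ell-1}$ is disjoint from the original.

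It then remains to promote this combinatorial overlap to an irremovable intersection. Both $\beta$ and $\phi(\beta)$ cross the shared band $K_\ell$ in the same direction, from $D_\ell$ to $D_{\ell+1}$, so inside $K_\ell$ they are parallel straight segments and do not meet there; any intersection must therefore lie in $D_\ell$ or $D_{\ell+1}$. I would read off the cyclic order in which the two strands hit $\partial K_\ell$ on its two sides: on the $D_\ell$-side, $\beta$ arrives from $K_{\ell-1}$ while $\phi(\beta)$ arrives from the pre-traversal part of $\phi(\alpha)$; on the $D_{\ell+1}$-side, $\phi(\beta)$ leaves toward $K_{\ell+1}$ while $\beta$ leaves the edge-cycle. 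The orientation of the rotation (``to the right'') forces these two orders to be opposite, so the two strands must cross. Moreover this crossing cannot be absorbed: by Remarks~\ref{rem:bigons} the spine of a bigon would be a line of rectangles running through the bands met by both strands, which forces one of $\beta,\phi(\beta)$ to enter and immediately re-exit $K_\ell$, a backtrack excluded by part~(\ref{def:normalpos:c}) of Definition~\ref{def:normalpos}. Hence, if $\alpha$ were clean, the intersection of $\alpha$ and $\phi(\alpha)$ would strictly exceed $\partial\alpha$, a contradiction.

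The hard part will be exactly this last step, turning ``the band-sequences overlap'' into a crossing that survives all minimising isotopies. The subtlety is that the two strands share the band $K_\ell$, so two of the four relevant endpoints lie on the same band and Lemma~\ref{lem:conflict} does not apply verbatim; one genuinely has to invoke the orientation of the rotation together with the no-backtracking clause of normal position to exclude an absorbing bigon, rather than a purely combinatorial count of endpoints. Getting this orientation bookkeeping right---equivalently, verifying from the explicit pictures of $\mathcal{A}$ that the two strands enter the shared corridor $D_\ell\cup K_\ell\cup D_{\ell+1}$ in swapped order---is where the real care is required.
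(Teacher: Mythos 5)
Your overall strategy does match the paper's: argue by contradiction, put $\alpha$ and $\phi(\alpha)$ in normal position, use the $\ell$-edge rotation of the annulus to force an intersection point $p$ between the traversing subarc and its image, and then rule out a cancelling bigon via Remarks~\ref{rem:bigons} and part~(\ref{def:normalpos:c}) of Definition~\ref{def:normalpos}. For the existence of $p$, the paper argues more robustly than your cyclic-order bookkeeping: in the cut-open annulus it takes the disk $C$ bounded by the subarc $\alpha'$ and the graph, and observes that exactly one endpoint of $\phi(\alpha')$ lies in $C$, so $\phi(\alpha')$ must cross $\alpha'$. Your order-swapping argument is in the same spirit, but note that it silently assumes $\beta$ leaves the edge-cycle immediately after $K_\ell$; if the actual consecutive traversal of $\alpha$ is longer than $\ell+1$ and you extracted a sub-traversal, that assumption fails.

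The genuine gap is in the bigon exclusion, exactly the step you flag as ``the hard part''. You claim a bigon at $p$ would be forced to run through the bands met by both strands and hence to backtrack through $K_\ell$, contradicting normal position. But Remarks~\ref{rem:bigons} only say that a bigon is a chain of rectangles whose spine is a line and that no rectangle has both long sides on the same arc; nothing confines the bigon to the corridor $D_\ell\cup K_\ell\cup D_{\ell+1}$. Its two sides leave $p$ through one common band and can then travel arbitrarily far through the surface, with $\alpha$ and $\phi(\alpha)$ running in parallel, closing up at some \emph{other} intersection point of $\alpha\cap\phi(\alpha)$ --- and such points may well exist, since nothing is known about the rest of $\alpha$ (the paper explicitly warns about this in the proof of Proposition~\ref{prop:3345}: one cannot assume the exhibited intersection is the only one). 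The paper's device for closing this hole, which your proposal lacks, is to take $n\geq\ell+1$ to be the \emph{maximal} number of consecutively traversed bands, rather than normalising to $n=\ell+1$. Maximality (together with normal position) shows that the component of $\phi(\alpha)\setminus\{p\}$ containing $q$ can leave $D$ through neither $K^-$ nor $K^+$, pins the first rectangle of the bigon to $K^+$, and then --- since $p$ is the only intersection of $\alpha'$ with $\phi(\alpha')$ --- forces the bigon to continue through at least $n-1$ further consecutive bands, so that $\alpha$ would traverse more than $n$ consecutive bands, contradicting maximality. Without some substitute for this maximality argument, your final step does not go through.
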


Here, a sequence of bands $K^{(1)},K^{(2)},\ldots$ is {\em consecutive}, if the set $(\bigcup_r K^{(r)}\cup\bigcup_i A_i\cup\bigcup_j B_j)\setminus\Gamma$ has a connected component that intersects all bands $K^{(r)}$ of the sequence in this order, i.e.\ it is possible to stay on the same side of the graph when walking along the bands. The twist length $\ell$ denotes the number of edges of $\Gamma$ enclosed between $\gamma$ and $\phi(\gamma)$, where $\gamma$ is a spanning arc of the corresponding boundary annulus that ends at a vertex of $\Gamma$ (compare Section~\ref{sec:monodromies}).

\begin{proof} Suppose that $\alpha$ is a clean arc in normal position that traverses $n$ consecutive bands, $n\geq\ell+1$. We may assume that $n$ is the maximal number of consecutively traversed bands. In these bands as well as the adjacent disks, isotope $\alpha$ such that it stays on one side of the graph, keeping it in normal position. Now bring $\phi(\alpha)$ into normal position transverse to $\alpha$ as described in the Remarks~\ref{rem:normalpos}. Recall the description of the monodromy $\phi$ as a {\tt} twist from Section~\ref{sec:monodromies}: cutting the surface $S$ open along the graph results in $d$ annuli, where $d$ is the number of components of $\partial S=L$ and each annulus has a link component as one boundary and a cycle consisting of edges of the graph as the other boundary. In one of these annuli we will see a subarc $\alpha'\subset\alpha$ that has exactly its endpoints in common with the graph and that travels near the edge boundary for a distance of $n$ consecutive edges. (Note that $\alpha'$ cannot have any endpoint on $\partial S$. This would contradict part~(\ref{def:normalpos:c}) of Definition~\ref{def:normalpos}). Let $C$ be the disk bounded by $\alpha'$ and the graph. The monodromy $\phi$ keeps the link-boundary of this annulus fixed and rotates the neighbourhood $N$ of the graph boundary by $\ell$ edges. Since $n\geq\ell+1$, $\phi(\alpha')$ has one of its endpoints in $C$ and the other outside of $C$, so $\alpha'$ has to intersect its image $\phi(\alpha')$ in a point $p\in U$, and we may assume that $p$ is the only intersection between $\alpha'$ and $\phi(\alpha')$. Denote by $q$ the endpoint of $\phi(\alpha')$ that lies in $C$ and let $D$ be the disk $A_i$ or $B_j$ containing $q$. Then make sure that $p\in D$ by an isotopy on $\phi(\alpha')$ preserving normal position if necessary (compare Figures~\ref{fig:Annulus} and~\ref{fig:crossing}).
\begin{figure}[h]
\includegraphics{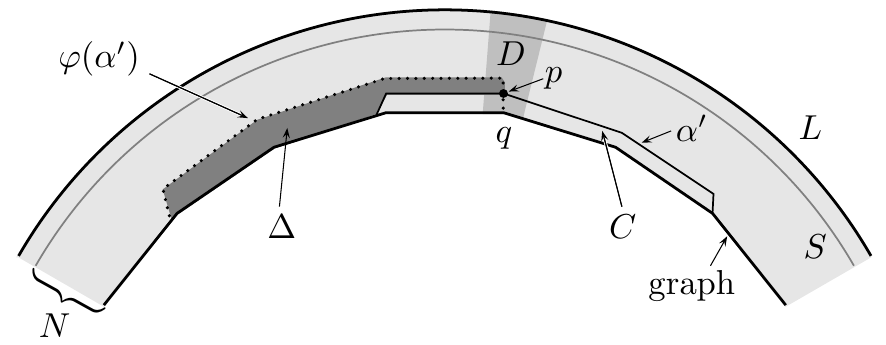}
\caption{A normal arc passing through more than $\ell$ consecutive bands has to intersect its image under the monodromy (here $\ell=2$). Part of an a priori possible bigon $\Delta$. \label{fig:Annulus}}
\end{figure}
However $\alpha$ is clean, so there must be a bigon in $S$ whose sides consist of a subarc of $\alpha$ and a subarc of $\phi(\alpha)$. After possibly removing a certain number of such bigons, we will be left with a bigon $\Delta$ starting at $p$. From the Remarks~\ref{rem:bigons} we know that $\Delta$ has to leave $D$ and consists of a sequence of rectangles. Let $R$ be the first rectangle in this sequence, i.e.\ $R$ is contained in a band adjacent to $D$. Let $K^-,K^+$ be the two bands adjacent to $D$ that contain segments of $\alpha'$, $K^+$ being the one that also contains a segment of $\phi(\alpha')$ (see Figure~\ref{fig:crossing}).
\begin{figure}[h]
\includegraphics{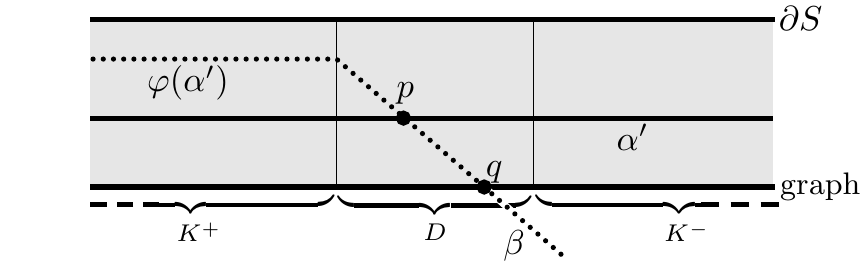}
\caption{Part of the mentioned annulus, where the arcs $\alpha'$ and $\phi(\alpha')$ intersect in a point $p\in D$.\label{fig:crossing}}
\end{figure}
Let $\beta$ be the component of $\phi(\alpha)\setminus\{p\}$ that contains $q$. We claim that $\beta$ cannot leave $D$ through $K^-$ nor $K^+$. Indeed, if $\beta$ would leave $D$ through $K^-$, $\phi(\alpha)$ would traverse $n+1$ consecutive bands, contradicting the assumption on $n$ being maximal. On the other hand, if $\beta$ would leave $D$ through $K^+$, we could reduce the number of segments in $\phi(\alpha)\cap K^+$, contradicting the normal position of $\phi(\alpha)$, i.e.\ part~(\ref{def:normalpos:c}) of Definition~\ref{def:normalpos}. In contrast, $\alpha$ leaves $D$, starting from $p$ in both directions, through $K^-$ and $K^+$. Consider now the subarcs of $\alpha$ and $\phi(\alpha)$ that constitute two opposite sides of the rectangle $R$. Since $R$ is contained in a band adjacent to $D$, these two subarcs arrive at $\partial D$ through the same band, and they connect directly to $p\in D$. Therefore, we must have $R\subset K^+$, since $K^+$ is the only band containing two subarcs of $\alpha$ and $\phi(\alpha)$ that directly connect to $p\in D$. Furthermore, $R$ has to be the region enclosed by $\alpha'\cap K^+$ and $\phi(\alpha')\cap K^+$. Following $\phi(\alpha')$ in the direction from $q$ to $p$, we see that it leaves $D$ through $K^+$ as one of the sides of $R$ and continues staying on the same side of the graph for exactly $n-1$ more edges. By assumption, $p$ is the only intersection between $\alpha'$ and $\phi(\alpha')$, so the bigon $\Delta$ has to continue for at least $n-1$ more rectangles through consecutive bands. Similarly, the sides of these rectangles that are subarcs of $\alpha$ have to continue for at least $n-1$ consecutive bands. We obtain a contradiction to the maximality of $n$, because $\alpha'$ ends after $n-\ell$ bands starting from $D$, since $\phi$ rotates the graph by $\ell$ edges. This finishes the proof.
\end{proof}

\begin{proof}[Proof of Proposition \ref{prop:3345}]
We will concentrate on the most complicated case of the torus knot $T(3,5)$. It contains all difficulties appearing in the proofs for $T(3,3)$ and $T(3,4)$ which go along the same lines with fewer cases to consider. For each link appearing in Table~\ref{table} of Section~\ref{sec:exceptional}, we will indicate one (but not every) possible choice of a cutting arc that yields the link in question. Let hence $S$ be the fibre surface of $T(3,5)$ and let $\alpha\subset S$ be any arc that preserves fibredness, i.e.\ a clean arc. Bring $\alpha$ into normal position using an isotopy (not fixing the boundary), cf.\ Remarks~\ref{rem:normalpos}. Since $\phi$ permutes the vertices $\{a_i\}$ cyclically as well as the vertices $\{b_j\}$, it suffices to show that there are only finitely many clean arcs starting at a point of $\partial A_1$ or at a point of $\partial B_1$, up to isotopy. We may further assume that $\alpha$ starts either at a point of $\partial A_1$ between $k_{11}$ and $k_{15}$ or at a point of $\partial B_1$ between $k_{21}$ and $k_{31}$.

{\em \ul{Case A.}} $\alpha$ starts at $\partial A_1$, between $k_{11}$ and $k_{15}$. Then, $\alpha$ cannot continue through either of the bands $K_{11}$ nor $K_{15}$ by the last item of Remarks~\ref{rem:normalpos}. So, either $\alpha$ stays in $A_1$ (and there are only four such arcs up to isotopy), or it continues through $K_{12}, K_{13}$ or $K_{14}$. If $\alpha$ stays in $A_1$, the links obtained by cutting are $E_7$ (e.g.\ if $\alpha$ ends between $k_{11}$ and $k_{12}$) and $D_7$ (e.g.\ if $\alpha$ ends between $k_{12}$ and $k_{13}$).

{\em \ul{Case A.1.}} $\alpha$ continues through $K_{12}$. Arriving in $B_2$, there are three possibilities: either $\alpha$ ends at a point of $\partial B_2$ between $k_{22}$ and $k_{32}$ (and cutting along $\alpha$ yields $T(3,4)\# T(2,2)$), or it continues through $K_{22}$ or $K_{32}$ (ending at other points of $\partial B_2$ is impossible by the last item of Remarks~\ref{rem:normalpos}).

{\em \ul{Case A.1.1.}} $\alpha$ continues through $K_{22}$. Arriving in $A_2$, $\alpha$ can end at a point of $\partial A_2$ (cutting yields $T(2,7)\# T(2,2)$ if $\alpha$ ends between $k_{24}$ and $k_{25}$, and $T(2,3)$ summed with the unknot component of $D_5$ if $\alpha$ ends between $k_{23}$ and $k_{24}$), or it can continue through $K_{23}$ or $K_{24}$. It cannot continue through $K_{21}$, since $K_{12},K_{22},K_{21}$ is a sequence of three consecutive bands, so $\alpha$ would not be clean by Lemma~\ref{lem:consecutive}. Finally, $\alpha$ cannot continue through $K_{25}$. If it did, $\alpha$ and $\phi(\alpha)$ would intersect in a point of $A_1$, and Lemma~\ref{lem:conflict} would imply that $\alpha$ cannot be clean (see Figure~\ref{fig:T35}, top left). Note that we do not know whether the mentioned intersection is the only one since we do not know how $\alpha$ ends.

{\em \ul{Case A.1.1.1.}} $\alpha$ continues through $K_{23}$. From $B_3$, it cannot continue through $K_{13}$, for $K_{22},K_{23},K_{13}$ are consecutive (Lemma~\ref{lem:consecutive}). If it continues through $K_{33}$ it cannot continue through any band adjacent to $A_3$. Indeed, $K_{23},K_{33},K_{32}$ are consecutive, so $\alpha$ cannot continue through $K_{32}$. If it would continue through $K_{34}$ or $K_{35}$ or $K_{31}$, we could apply Lemma~\ref{lem:generalisedconflict} to the band $K_{33}$ to show that $\alpha$ is not clean (see Figure~\ref{fig:T35}).

\begin{figure}[ht]
\includegraphics{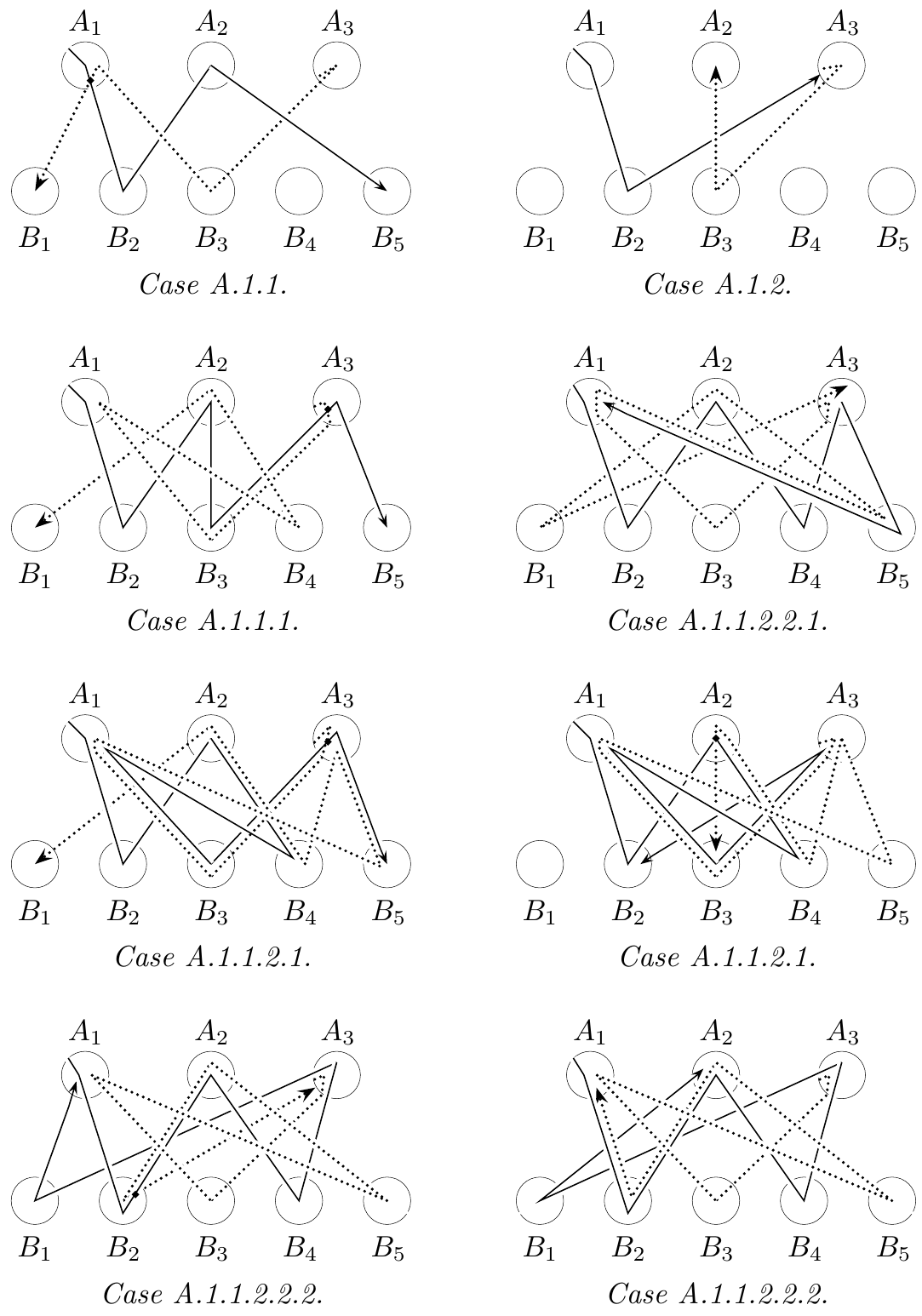}
\caption{Schematic illustration for a selection of the cases in the proof of Proposition~\ref{prop:3345}. The arc $\alpha$ is drawn as solid line, whereas $\phi(\alpha)$ is shown as a dotted line. \label{fig:T35}}
\end{figure}

{\em \ul{Case A.1.1.2.}} $\alpha$ continues through $K_{24}$. If it ends in $B_4$ between $k_{14}$ and $k_{34}$, we obtain $T(2,5)\# T(2,4)$ after cutting. Otherwise, it can continue from $B_4$ through $K_{14}$ or through $K_{34}$.

{\em \ul{Case A.1.1.2.1.}} If it continues through $K_{14}$, it cannot go further. Firstly, $K_{24},K_{14},K_{15}$ are consecutive, so $K_{15}$ is no option (Lemma~\ref{lem:consecutive}). Neither can it proceed through $K_{11}$ (this would produce a self-inter\-sec\-tion of $\alpha$) nor $K_{12}$ (for otherwise we could apply Lemma~\ref{lem:conflict} to an intersection between $\alpha$ and $\phi(\alpha)$ in $A_1$). If it continues through $K_{13}$, it cannot go on through $K_{23}$ since $K_{14},K_{13},K_{23}$ are consecutive (Lemma~\ref{lem:consecutive}). Suppose it continues through $K_{33}$. From $A_3$, it cannot proceed through any of $K_{31},K_{35},K_{34}$, for otherwise we could apply Lemma~\ref{lem:generalisedconflict} to the bands $K_{13}$ and $K_{33}$, with an intersection between $\alpha$ and $\phi(\alpha)$ occuring in $A_3$ (see Figure~\ref{fig:T35} left). However, $\alpha$ cannot continue through $K_{32}$ either, because we could again apply Lemma~\ref{lem:generalisedconflict}, this time for the band $K_{24}$ and an intersection in $A_2$ (see Figure~\ref{fig:T35} right).

{\em \ul{Case A.1.1.2.2.}} $\alpha$ continues from $B_4$ through $K_{34}$. If it ends in $A_3$ between $k_{35}$ and $k_{31}$, cutting yields $T(2,5)\# T(2,2)\# T(2,3)$. Otherwise, it cannot continue from $A_3$ through $K_{33}$ since $K_{24},K_{34},K_{33}$ are consecutive. Neither can it proceed through $K_{32}$ (apply Lemma~\ref{lem:conflict} to $A_3$). So $\alpha$ can only continue through $K_{35}$ or $K_{31}$.

{\em \ul{Case A.1.1.2.2.1.}} If it continues through $K_{35}$, the only option to go further is through $K_{15}$, since $K_{34},K_{35},K_{25}$ are consecutive. From $A_1$ (compare Figure~\ref{fig:T35}), it cannot continue through $K_{11}$ nor $K_{12}$ (apply Lemma~\ref{lem:generalisedconflict} to $K_{15}$ with an intersection occuring in $A_1$). Neither can it continue through $K_{14}$, since $K_{35},K_{15},K_{14}$ are consecutive. So it has to go through $K_{13}$. Arriving in $B_3$, it cannot continue through $K_{23}$ (apply Lemma~\ref{lem:generalisedconflict} to $K_{34}$ with an intersection occuring in $B_4$). Therefore $\alpha$ has to continue through $K_{33}$. From $A_3$, it cannot proceed further. Firstly, $K_{32}$ is not an option (otherwise apply Lemma~\ref{lem:generalisedconflict} to $K_{34}$ and $K_{24}$ with an intersection in $A_2$). Neither can it go through $K_{34}$ or $K_{35}$ (apply Lemma~\ref{lem:generalisedconflict} to $K_{15},K_{13},K_{33}$ with an intersection occuring in $A_3$). Finally, it cannot pass through $K_{31}$ either (apply Lemma~\ref{lem:generalisedconflict} to the bands $K_{15},K_{13},K_{33}$ with an intersection occuring in $A_3$).

{\em \ul{Case A.1.1.2.2.2.}} If it continues through $K_{31}$ and arrives in $B_1$, it cannot proceed through $K_{11}$ (apply Lemma~\ref{lem:generalisedconflict} to $K_{22}$ with an intersection occuring in $B_2$, see Figure~\ref{fig:T35} left). So it has to go through $K_{21}$. From $A_2$, it cannot proceed through $K_{22}$, for $K_{31},K_{21},K_{22}$ are consecutive. Neither can it go through either of $K_{23}$ nor $K_{24}$ (apply Lemma~\ref{lem:conflict} to an intersection occuring in $A_2$, see Figure~\ref{fig:T35} right). Finally, $K_{25}$ can be ruled out by Lemma~\ref{lem:generalisedconflict}, applied to the bands $K_{22}$ and $K_{12}$, with an intersection occuring in $A_1$.

{\em \ul{Case A.1.2.}} $\alpha$ continues through $K_{32}$ (see Figure~\ref{fig:T35}). Arriving in $A_3$, it cannot continue through any band. Firstly, $K_{12},K_{32},K_{33}$ are consecutive, so $\alpha$ cannot continue through $K_{33}$. If it would continue through any of the other bands adjacent to $A_3$, $\alpha$ would intersect $\phi(\alpha)$ in a point of $A_3$ such that we could apply Lemma~\ref{lem:conflict} to obtain a contradiction to $\alpha$ being clean.

{\em \ul{Case A.2.}} $\alpha$ proceeds through $K_{13}$. If it ends in $B_3$ between $k_{23}$ and $k_{33}$, we obtain $T(2,8)$ after cutting. From $B_3$, it can continue through $K_{23}$ or through $K_{33}$.

{\em \ul{Case A.2.1.}} $\alpha$ continues through $K_{23}$. It cannot go on via $K_{22}$, for $K_{13},K_{23},K_{22}$ are consecutive. Neither can it continue through $K_{21}$ or $K_{25}$ by Lemma~\ref{lem:conflict} applied to an intersection in $A_1$. If it next passes through $K_{24}$, it cannot go on through $K_{14}$, because $K_{23},K_{24},K_{14}$ are consecutive. Proceeding through $K_{34}$, it can end in $A_3$ between $k_{31}$ and $k_{32}$ (this yields $T(2,5)\# T(2,3)\# T(2,2)$). However, the only possibility for $\alpha$ to go further is via $K_{32}$, for $K_{24},K_{34},K_{33}$ are consecutive (so $K_{33}$ is no option), and $\alpha$ cannot continue through $K_{35}$ nor $K_{31}$ by applying Lemma~\ref{lem:generalisedconflict} to the band $K_{34}$ with an intersection of $\alpha,\phi(\alpha)$ in $A_3$. So $\alpha$ continues through $K_{32}$ and arrives in $B_2$. From there, it cannot continue through $K_{12}$ (apply Lemma~\ref{lem:generalisedconflict} to $K_{22}$ and an intersection in $B_3$). If it continues through $K_{22}$, it cannot go further: $K_{23}$ is impossible because $K_{32},K_{22},K_{23}$ are consecutive, $K_{24}$ can be excluded by Lemma~\ref{lem:conflict}, applied to $A_2$, and $K_{21}$ as well as $K_{25}$ can be ruled out by Lemma~\ref{lem:generalisedconflict}, applied to $K_{23}$ and $K_{13}$ with an intersection occuring in $A_1$.

{\em \ul{Case A.2.2.}} $\alpha$ continues through $K_{33}$. This is similar to Case~A.2.1. Again there is always a single option to go on, until there is no possibility left after four more steps.

{\em \ul{Case A.3.}} $\alpha$ continues through $K_{14}$. This is analogous to Case~A.1.\\

{\em \ul{Case B.}} $\alpha$ starts at $\partial B_1$ between $k_{21}$ and $k_{31}$. Then, it can only continue through $K_{11}$ by the last item of Remarks~\ref{rem:normalpos}. From $A_1$, it can proceed through four distinct bands.

{\em \ul{Case B.1.}} $\alpha$ continues through $K_{15}$. Since $K_{11},K_{15},K_{25}$ are consecutive, it can a priori only continue through $K_{35}$. But this is impossible as well by Lemma~\ref{lem:conflict}, applied to the intersection between $\alpha$ and $\phi(\alpha)$ occuring in $B_1$.

{\em \ul{Case B.2.}} $\alpha$ continues through $K_{12}$. This is analogous to Case~B.1.

{\em \ul{Case B.3.}} $\alpha$ continues through $K_{14}$. Arriving in $B_4$, it can end between $k_{24}$ and $k_{34}$ (this results in $T(2,3)$ summed with the trefoil component of $D_5$).

{\em \ul{Case B.3.1.}} $\alpha$ continues through $K_{24}$. From $A_2$, it cannot continue through $K_{23}$ because $K_{14},K_{24},K_{23}$ are consecutive (Lemma~\ref{lem:consecutive}). Neither can it go on through $K_{22}$ nor $K_{21}$ (apply Lemma~\ref{lem:conflict} to $A_1$). Suppose $\alpha$ continues through $K_{25}$. From $B_5$, it cannot go on via $K_{35}$ since $K_{24},K_{25},K_{15}$ are consecutive. If it proceeds via $K_{35}$, we can apply Lemma~\ref{lem:generalisedconflict} to the band $K_{11}$ with an intersection in $B_1$ to obtain a contradiction.

{\em \ul{Case B.3.2.}} $\alpha$ continues through $K_{34}$. From $A_3$, there are only two options for $\alpha$ to proceed further. Indeed, $K_{14},K_{34},K_{35}$ are consecutive, so $K_{35}$ is out of the question. $K_{31}$ can be ruled out by Lemma~\ref{lem:conflict} for $A_3$. The remaining possibilities are $K_{32}$ and $K_{33}$.

{\em \ul{Case B.3.2.1.}} $\alpha$ continues through $K_{32}$. From there, it cannot continue through $K_{22}$ (apply Lemma~\ref{lem:generalisedconflict} to $K_{32}$). So it has to branch off via $K_{12}$ to $A_1$. From there, it cannot continue through $K_{15}$ since otherwise $\alpha$ would self intersect in $A_1$. $K_{11}$ is impossible as well, for $K_{32},K_{12},K_{11}$ are consecutive. $K_{15}$ can be ruled out using Lemma~\ref{lem:conflict} for $A_3$. So $\alpha$ can only continue through $K_{13}$, and from there only through $K_{23}$ ($K_{12},K_{13},K_{33}$ are consecutive). From $A_2$, it cannot go on through any band except $K_{25}$. Indeed, $K_{22}$ is impossible because $K_{13},K_{23},K_{22}$ are consecutive. $K_{21}$ and $K_{24}$ can be ruled out by applying Lemma~\ref{lem:generalisedconflict} to $(K_{34},K_{14})$ and $K_{23}$ respectively. After passing through $K_{25}$, $\alpha$ cannot go further: $K_{15}$ is impossible by Lemma~\ref{lem:generalisedconflict} (applied to $K_{23},K_{25}$) and $K_{35}$ can be ruled out by applying Lemma~\ref{lem:generalisedconflict} to $K_{34},K_{14},K_{11}$.

{\em \ul{Case B.3.2.2.}} $\alpha$ continues through $K_{33}$. Then, $K_{13}$ cannot be next since $K_{34},K_{32},K_{13}$ are consecutive. Thus $\alpha$ passes through $K_{23}$. From $A_2$, it cannot go on via $K_{24}$, for $K_{33},K_{23},K_{24}$ are consecutive. $K_{21}$ and $K_{22}$ are impossible as well (apply Lemma~\ref{lem:generalisedconflict} to $K_{14}$). So $\alpha$ has to go through $K_{25}$. Then, it cannot proceed through $K_{15}$ (apply Lemma~\ref{lem:generalisedconflict} to $K_{25}$). It cannot go via $K_{35}$ either (apply Lemma~\ref{lem:generalisedconflict} to $K_{14},K_{11}$), so $\alpha$ cannot continue at all.

{\em \ul{Case B.4.}} $\alpha$ continues through $K_{13}$. This is analogous to Case~B.3 and finishes the proof.
\end{proof}

\begin{proof}[Proof of Proposition \ref{prop:DnE7}]
We will present a case by case analysis for the possible clean arcs $\alpha$ in the fibre surface $S$ of each of $E_7$ and $D_n$. The reader interested in studying the proof is advised to follow the arguments along with a pencil and copies of Figures~\ref{fig:E7Hexagons} and~\ref{fig:DnSquares}, top and bottom. As in the proof of Proposition~\ref{prop:3345} above, we will make extensive use of Lemma~\ref{lem:consecutive} to exclude further polygon edges that $\alpha$ might cross on its way from its starting point to its end. In order to keep the proof short, we will usually refer to such situations by just saying "$\alpha$ is trapped", or by saying that an edge "is a trap", meaning that $\alpha$ would traverse too many consecutive bands to be clean.\\

{\bf ($E_7$)}\qquad First, let $S$ be the fibre surface of $E_7$, denote its monodromy $\phi$ and let $\alpha\subset S$ be a clean arc. Bring $\alpha$ into normal position with respect to $k_1,\ldots,k_9$. Note that the set of vertices of the hexagons $A_1,A_2,A_3$ decompose into two orbits under $\phi$, namely the orbit of the vertex of $A_1$ between $k_1$ and $k_2$, and the orbit of the vertex of $A_1$ between $k_2$ and $k_7$. We may therefore assume by Remark~\ref{rem:clean} that $\alpha$ starts at one of these two vertices.

{\em \ul{Case 1.}} $\alpha$ starts at the vertex of $A_1$ between $k_1$ and $k_2$. Define an involution $\tau:S\to S$ as follows: $\tau$ interchanges hexagons $A_1$ and $A_2$ and then reflects $A_1,A_2,A_3$ along the diagonals parallel to $k_7$, $k_8$, $k_1$ respectively, whereby it induces the permutation $(13)(49)(58)(67)$ on the edges $(k_1,\ldots,k_9)$. We have $\phi\circ\tau\circ\phi=\tau$, $\tau\circ\phi$ fixes the vertex of $A_1$ between $k_1$ and $k_2$ and swaps the edges $k_4,k_8$ as well as the edges $k_5,k_7$. By Remark~\ref{rem:clean}, we may therefore assume that $\alpha$ either continues through $k_4$ or through $k_5$.

\begin{figure}[h]
\includegraphics{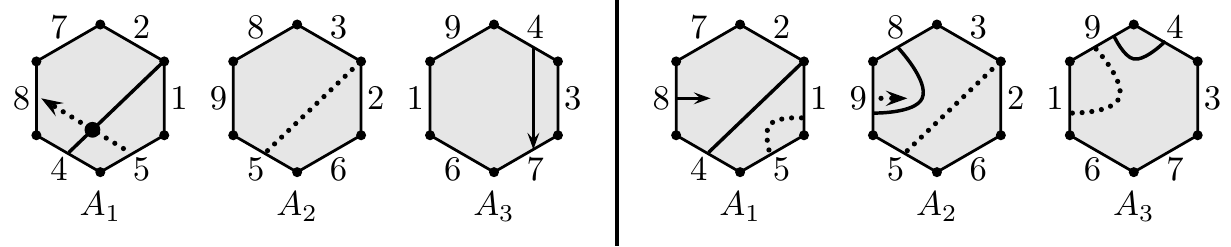}
\caption{Illustration of two of the steps in Case 1.1. The arc $\alpha$ is drawn as solid line, whereas $\phi(\alpha)$ is shown as a dotted line. \label{fig:E7Case11}}
\end{figure}

{\em \ul{Case 1.1.}} $\alpha$ continues through $k_4$. From $A_3$, it can only choose $k_9$. Indeed, $k_7$, $k_6$ and $k_1$ would imply an intersection in $A_1$ (Lemma~\ref{lem:conflict}, compare Figure~\ref{fig:E7Case11}, left), and $k_3$ is consecutive to $k_4$ (Lemma~\ref{lem:consecutive}, applied to the component with twist length one). Arriving in $A_2$, $k_2$ and $k_6$ would imply an intersection in $A_2$, so continuation is possible through $k_3$, $k_5$, $k_8$ only. But if $\alpha$ continues through $k_5$ or $k_8$, it will be trapped (compare Figure~\ref{fig:E7Case11}, right). Therefore it goes through $k_3$. Arriving in $A_3$, it has to go through $k_4$ ($k_9$ implies an intersection in $A_3$ and $k_1$, $k_6$, $k_7$ imply intersections in $A_1$). However, passing through $k_4$, $\alpha$ is trapped.

{\em \ul{Case 1.2.}} $\alpha$ continues through $k_5$. From $A_2$, it can continue through $k_3$, $k_6$, $k_8$ or $k_9$ ($k_2$ implies an intersection in $A_2$). If it passes through $k_6$ or $k_9$, it is trapped. So $k_8$ and $k_3$ are the only possiblities left.

{\em \ul{Case 1.2.1.}} $\alpha$ continues through $k_8$. Upon arrival in $A_1$, it cannot continue through $k_2$, $k_5$ (intersection in $A_2$) nor through $k_1$ (this would imply an intersection in $A_1$). But if it continues through either of $k_7$ or $k_4$, it is trapped.

{\em \ul{Case 1.2.2.}} $\alpha$ continues through $k_3$. From $A_3$, $\alpha$ cannot go on through $k_9$, $k_1$ (this would imply an intersection in $A_3$). If it passes through $k_4$, it is trapped. Suppose it continues through $k_6$. Arriving in $A_2$, it cannot continue through $k_9$, $k_8$, $k_3$ (this would produce an intersection in $A_3$), nor through $k_2$ (intersection in $A_2$). Finally, continuing through $k_5$, it will be trapped. Therefore $\alpha$ has to continue from $A_3$ through $k_7$. Arriving in $A_1$, it can continue through $k_2$, $k_8$ or $k_4$ ($k_1$ implies an intersection in $A_1$ and $k_5$ implies an intersection in $A_2$). But all of these are traps.

{\em \ul{Case 2.}} $\alpha$ starts at the vertex of $A_1$ between $k_2$ and $k_7$. Define an involution $\sigma:S\to S$ as follows: $\sigma$ interchanges $A_1$ and $A_2$ and then reflects $A_1,A_2,A_3$ along the diagonals parallel to $k_1$, $k_2$, $k_4$ respectively, inducing the permutation $(19)(28)(37)(46)$ on the edges. As in Case~1, we have $\phi\circ\sigma\circ\phi=\sigma$, and $\sigma\circ\phi$ fixes the vertex of $A_1$ between $k_2$ and $k_7$, swapping $k_4$ and $k_5$ as well as $k_1$ and $k_8$. By Remark~\ref{rem:clean}, we may therefore assume that $\alpha$ continues through either $k_1$ or $k_5$. However, if $\alpha$ continues through $k_1$, it is trapped. Therefore it continues through $k_5$. From $A_2$, it can continue through $k_8$ or $k_9$ ($k_6$ is a trap and $k_2$, $k_3$ imply intersections in $A_2$).

{\em \ul{Case 2.1.}} $\alpha$ continues through $k_8$. From $A_1$, it cannot continue through any of $k_2$, $k_1$, $k_5$, because this would produce an intersection in $A_2$, and $k_7$ is a trap. Therefore, it continues through $k_4$ and arrives in $A_3$. Continuation through $k_1$ produces an intersection in $A_3$, and $k_6$, $k_7$, $k_3$ imply intersections in $A_1$. Finally, $k_9$ is a trap.

{\em \ul{Case 2.2.}} $\alpha$ continues through $k_9$. Arriving in $A_3$, it can only continue through $k_1$ or $k_4$ (any other continuation produces an intersection in $A_1$). However, both $k_1$ and $k_4$ are traps, ending the proof for $E_7$.\\

{\bf ($D_n$, $n$ even)}\qquad Now, suppose $n$ is even and let $\alpha$ be a clean arc in the fibre surface $S$ of $D_n$ in normal position with respect to $k_1,\ldots,k_{n-1},\ k_1',\ldots,k_{n-1}'$. Define an involution $\tau:S\to S$ as follows: $\tau$ permutes the disks $A_i$ according to the rule $\tau(A_i)=A_{n-i+2}$ for $i=1,\ldots,n-1$ and then reflects every $A_i$ on the diagonal that contains the vertex between $k_i$ and $k_{i+2}$ (all indices are to be taken modulo $n$). Again $\phi\circ\tau\circ\phi=\tau$, and $\tau\circ\phi$ fixes the vertex of $A_1$ between $k_1'$ and $k_2'$ as well as the vertex between $k_1$ and $k_3$, and swaps the other two vertices. We may therefore assume that $\alpha$ starts at a vertex of $A_1$ which is not the vertex between $k_2'$ and $k_3$.

{\em \ul{Case 1.}} $\alpha$ starts at the vertex of $A_1$ between $k_1$ and $k_1'$. If it continues through $k_2'$, it is already trapped. So it has to continue through $k_3$. Arriving in $A_3$, it can continue through $k_3'$, $k_4'$ or $k_5$.

{\em \ul{Case 1.1.}} $\alpha$ continues from $A_3$ through $k_3'$. From $A_2$, it cannot continue through $k_2$ (otherwise it would intersect with $\phi(\alpha)$), so it can only proceed through $k_2'$ or $k_4$. However, both are traps.

{\em \ul{Case 1.2.}} $\alpha$ continues from $A_3$ through $k_4'$. This is similar to Case~1.1: arriving in $A_4$, $\alpha$ can only continue through $k_5'$ (which is a trap) or $k_4$. If it goes through $k_4$, it has to continue from $A_2$ through $k_3'$ ($k_2$ produces an intersection in $A_2$ and $k_2'$ produces an intersection in $A_3$). Then however, it is trapped again.

{\em \ul{Case 1.3.}} $\alpha$ can therefore continue from $A_3$ through $k_5$ only. In $A_5$, the same situation reproduces, except that all indices in consideration are now shifted by $+2$. Therefore the only way for $\alpha$ to continue from $A_5$ is by passing through the edges $k_7,k_9,k_{11},\ldots$ After at most $n/2$ more steps, $\alpha$ will be trapped.

{\em \ul{Case 2.}} $\alpha$ starts at the vertex of $A_1$ between $k_1'$ and $k_2'$. Using $\tau$ again, we may assume that it continues through $k_1$ to $A_{n-2}$. If it goes through $k_{n-1}'$ next, it is trapped since it is forced to follow the sequence of edges $k_{n-1},k_{n-2}',k_{n-2},k_{n-3}',\ldots$ If it goes through $k_{n-2}'$ to $A_{n-3}$ instead, it can only continue from there through $k_{n-3}'$ or $k_{n-1}$, and these are traps again. So it has to continue from $A_{n-2}$ through $k_{n-2}$. In $A_{n-4}$, the same situation as one step earlier (where $\alpha$ arrived through $k_1$ in $A_{n-2}$) reproduces, except that all indices appearing in the consideration are now shifted by $-2$. Hence the only way $\alpha$ can continue from $A_{n-4}$ is by going through the sequence of edges $k_{n-4},k_{n-6},k_{n-8},\ldots$ After at most $n/2$ steps, $\alpha$ will be trapped.

{\em \ul{Case 3.}} $\alpha$ starts at the vertex of $A_1$ between $k_1$ and $k_3$. Using the involution $\tau$ from above, we may assume that it continues through $k_2'$. From $A_2$, it cannot go on through $k_4$, for this would imply an intersection in $A_2$. However, the two possibilities that remain ($k_3'$ and $k_2$) are traps, which ends the proof for $D_n$, $n$ even.\\

{\bf ($D_n$, $n$ odd)}\qquad Finally, let $n$ be odd and let $S$ be the fibre surface of $D_n$. Suppose again we have a clean arc $\alpha\subset S$ in normal position with respect to $k_1,\ldots,k_{2n-2}$. Since the monodromy permutes the $A_i$ cyclically and since there are only two orbits of vertices of the $A_i$, we may assume that $\alpha$ starts in $A_1$, at the vertex between $k_1$ and $k_2$, or at the vertex between $k_2$ and $k_n$. As before, we then make use of Remark~\ref{rem:clean} with the help of the involution $\tau:S\to S$ defined as follows: $\tau(A_i)=A_{n-i+2}$ by translations followed by a reflection on the diagonal of $A_i$ that contains the vertex between $k_{n+i-1}$ and $k_{n+i}$ for $i=1,2$ and reflection on the diagonal of $A_i$ that contains the vertex between $k_i$ and $k_{i+1}$ for $i=3,\ldots,n-1$. Applying Remark~\ref{rem:clean} as before, we may assume that $\alpha$ either starts at the vertex of $A_1$ between $k_1,k_2$ and continues through $k_n$ (say), or that it starts at the vertex of $A_1$ between $k_2$ and $k_n$, continuing through $k_1$ (say). So there are two cases to consider, one being very similar to Case~1 above and the other similar to Case~3. No new arguments are needed.
\end{proof}

\vspace{4ex}

\end{document}